\newcounter{assumption}
\newtheorem{remark}{Remark}
\newtheorem{lemma}{Lemma}
\newtheorem{thm}{Theorem}[section]
\newtheorem{prop}{Proposition}[section]
\newtheorem{cor}{Corollary}[section]
\numberwithin{equation}{section} \numberwithin{theorem}{section}
\numberwithin{lemma}{section} \numberwithin{corollary}{section}
\numberwithin{definition}{section}
\numberwithin{proposition}{section} \numberwithin{remark}{section}
\numberwithin{example}{section}
\renewcommand{\oddsidemargin}{5mm}
\def\R{\mathbb R}
\def\tPsi{\widetilde{\Psi}}
\def\tPhi{\widetilde{\Phi}}
\def\omu{\overline{\mu}}
\def\cn{c^*_n}
\def\@makefnmark{}
\begin{document}

\title{Analysis of Spreading Speeds with an Application to Cellular Neural Networks}

 \author{Zhi-Xian Yu  \\
 \ \\
 {\small \it College of Science, University of Shanghai for Science and Technology}\\
 {\small\it  Shanghai, 200093, China}\\
 {\small \tt zxyu@usst.edu.cn}\\
 \ \\
Lei Zhang\\
 {\small \it School of Mathematical Sciences, University of Science and Technology of China,}\\
{\small \it Hefei, Anhui 230026, China}\\
{\small\it  Department of Mathematics and Statistics, Memorial University of Newfoundland}\\
{\small \it  St. John's, NL AIC5S7, Canada} \\
{\small \tt zhanglei890512@gmail.com}
}

\date{}
\maketitle

 \paragraph{Abstract.}In this paper, we focus on some properties of the spreading speeds which can be estimated by linear operators approach, such as the sign, the continuity and a limiting case which admits no spreading phenomenon.  These theoretical results are well applied to study the effect of templates on propagation speeds for cellular neural networks (CNNs), which admit three kinds of propagating phenomenon. 
\vspace{0.25cm}\\
{\bf{ Keywords.}} Spreading speeds, propagating phenomenon, monotone semiflows, CNNs
\vspace{0.25cm}\\
{\bf{AMS subject classifications.}}  35C07, 34A33, 94C99
\\

\date{}

\section{Introduction}
In the pioneering works of Fisher \cite{fisher1937wave} and Kolmogorov, Petrovskii, and Piskunov
\cite{kolmogorov1937etude}, it was proved that the Fisher's equation with the spatial diffusion $u_t = u_{xx} + u(1-u)$ for $x \in \mathbb{R}$
admits a minimal wave speed $c_{min} = 2$ in the sense that there exits a traveling
wave solution with speed $c$ if and only if $c \geq c_{min}$. Fisher \cite{fisher1937wave} also conjectured that
$c^*= c_{min}$ is the spreading speed of this equation. Aronson and Weinberger \cite{aronson1975nonlinear,aronson1978multidimensional} proved this conjecture for equations with more general monostable nonlinearities. Since then, lots of works have shown the coincidence of the spreading speed with the minimal speed for travelling waves under appropriate assumptions for various evolution systems. Weinberger \cite{weinberger1982long} and Lui \cite{lui1989biological} established the theory of spreading speeds and monostable traveling waves for monotone (order-preserving) operators. This theory has been greatly developed recently in \cite{ding2015principal,fang2014traveling,li2005spreading,liang2006spreading,liang2007asymptotic,liang2010spreading} to monotone semiflows so that it can be applied to various discrete- and continuous-time evolution equations admitting the comparison principle.

When the spreading speeds can be estimated by linear operators approach, Liang and Zhao\cite{liang2007asymptotic} obtained the formula to compute it under the sufficient condition that the infimum of the function $\Phi(\mu)$ is attained at some finite value $\mu^*$ and $\Phi(+\infty)>\Phi(\mu^*)$($\Phi(\mu)$ is from \cite[Section 3]{liang2007asymptotic}). Recently, Ding and Liang \cite{ding2015principal} proved that the formula also holds for the case where $\Phi(+\infty)=\Phi(\mu^*)$. Actually, in most of the earlier works on spreading speeds, various evolution systems satisfy $\Phi(+\infty)=+\infty$, so the infimum can be obtained at some finite value. However, there is no result for the system with $\Phi(+\infty)<+\infty$. It is worth pointing out that the infimum of $\Phi(\mu)$ can be attained at positive infinity in this case. In this paper, we will investigate properties of $\Phi(\mu)$, and apply them to a cellular neural network(Here, $\Phi(+\infty)<+\infty$ can be occurred under some suitable parameters).

Cellular neural networks (CNNs for short), were first introduced in 1988 by Chua and Yang \cite{chua1988cellular,chua1988cellular1} as a novel class of
information processing systems, which possesses some of the key features
of neural networks (NNs) and which has important potential applications in such areas as image processing and pattern recognition  (see, e.g., \cite{chua1998cnn,chua1988cellular,chua1988cellular1}). CNN is simply an analogue dynamic processor
array, made of cells, which contain linear capacitors, linear
resistors, linear and nonlinear controlled sources. This circuit has been used sometimes to test the circuit robustness as well as for implementing the simplest propagating template. The circuit model of a one-dimensional simple CNN without input terms is
\begin{align}\label{1.3}
\frac{dx_i(t)}{dt} =-x_i(t) +\alpha f(x_{i-1}(t))+af(x_{i}(t))+\beta f(x_{i+1}(t)),\,\,i\in\mathbb{Z},
\end{align}
where the output
function $f$ (a nonlinearity) is given by
\begin{align}\label{1.2}
f(u)=\frac{1}{2}(|u+1|-|u-1|).
\end{align}
Here the node voltage $x_i$  at $i$ is called the state of the cell at $i$, $A:=[\alpha,a,\beta]$ constitutes the so-called cloning template, which measures the coupling weights and specifies the interaction between each cell and all its neighbor cells in terms of their state and output variables. That is, $\alpha$ is the interaction from $x_{i-1}$ to $x_i$, so it can be regarded as the rightward interaction. Similarly, $\beta$ is the leftward interaction and $a$ is the own evolution action.

In CNNs, some experimental studies have revealed the propagation
of traveling bursts of activity in slices of excitable neural tissue (see, e.g. \cite{golomb1997propagating,golomb1996propagation,perez1992propagation} ). The underlying mechanism for propagation of these waves (i.e. travelling waves) is thought to be synaptic in origin rather than diffusive as in the propagation of action potentials. Based on the work of \cite{ding2015principal,liang2007asymptotic,liang2010spreading}, the existence of spreading speeds for \eqref{1.3} can be obtained and the spreading phenomenon appears under the assumption $\alpha + \beta + a>1$(Proposition \ref{prop:both>0}). And then, we will analyze the effect of templates on propagation speeds for cellular neural networks (CNNs). It is surprising that the spreading speeds may be less than zero for some special template cases (see Tables \ref{tab:alpha>0&beta>0} and \ref{tab:alpha||beta=0}).
\begin{table}[h]
  \centering
\begin{tabular}{|c|c|c|c|}
\hline
 & $\alpha>\beta>0$ & $\alpha=\beta>0$ & $\beta>\alpha>0$ \\
\hline
$2\alpha^{\frac{1}{2}}\beta^{\frac{1}{2}}+a-1>0$ & $c^*_+>c^*_->0$ & $c^*_+=c^*_->0$ & $c^*_->c^*_+>0$ \\
\hline
$2\alpha^{\frac{1}{2}}\beta^{\frac{1}{2}}+a-1=0$ & $c^*_+>c^*_-=0$ & Impossible & $c^*_->c^*_+=0$ \\
\hline
$2\alpha^{\frac{1}{2}}\beta^{\frac{1}{2}}+a-1<0$ & $c^*_+>0>c^*_-$  & Impossible & $c^*_->0>c^*_+$ \\
\hline
\end{tabular}
  \caption{The cases of $\alpha+\beta+a>1$, $\alpha>0$ and $\beta>0$.\label{tab:alpha>0&beta>0}}
\end{table}
\begin{table}[h]
  \centering
\begin{tabular}{|c|c|c|c|}
\hline
& $\alpha>\beta=0$  & $\beta>\alpha=0$ \\
\hline
$a\geq 1$ & $c^*_+>c^*_-=0$ & $c^*_->c^*_+=0$ \\
\hline
$0 \leq a < 1$ & $c^*_+>0>c^*_-$  &$c^*_->0>c^*_+$ \\
\hline
\end{tabular}
 \caption{The cases of $\alpha+\beta+a>1$, $\alpha=0$ or $\beta=0$.   \label{tab:alpha||beta=0}}
\end{table}

From Table 1, \eqref{1.3} admits three propagation phenomenon: the signals will transfer to both sides, transfer to one side and stay on the other side, transfer to one side and diminish on the other side. We remark that $2\alpha^{\frac{1}{2}}\beta^{\frac{1}{2}}+a-1\leq0$ and $\alpha=\beta$ imply that $\alpha + \beta + a \leq 1 $, which contradicts with $\alpha + \beta + a >1 $, i.e. the spreading speeds do not exist in this case. On the other hand, we can see if the right interaction $\alpha$ is larger than the left one $\beta>0$, then the right spreading speed $c^*_+$ is greater than the left one $c^*_-$. It is worth pointing that $c^*_+>0$ in this case. Moreover,  in addition, if $2 \alpha^{\frac{1}{2}}\beta^{\frac{1}{2}}+a<1$, then it is a surprising phenomenon that the leftward spreading speed $c^*_-<0$, which tells that all signals transfer to the left side and diminish on the other side.

From Table 2, \eqref{1.3} only admits two propagation phenomenon: transfer to one side and stay on the other side, transfer to one side and diminish on the other side. It is not hard to understand that the left or right spreading speed is zero  when the leftward or right interaction is diminished and the own evolution interaction $a\geq 1$. It is interesting to find that leftward or rightward spreading speed may be less than zero when the leftward or rightward interaction is diminished and the own evolution interaction small enough, that is, $0\leq a<1$.

For a more complex problem, the above method may be not very useful. But it is still an interesting problem that how to find a suitable parameters such that $c^*_-<0$. In order to obtain the values of spreading speeds on the neighborhood of a limiting case which admits no spreading phenomenon, we try to prove the continuity of the spreading speeds and investigate a limiting case which admits no spreading phenomenon. That is, for the model \eqref{1.3}, consider the limiting case $\alpha+\beta+a=1$, $\alpha>0$ and $\beta>0$. From the analysis of this limiting case, we can find that there is a suitable parameters such that the $c^*_-$ less than zero.


The remaining part of the paper is organized as follows. Section 2 is devoted to establishing a generalized method to analysis some properties of the spreading speeds, which can be estimated by linear operators approach such as the sign, the continuity and a limiting case which admits no spreading phenomenon. In Section 3, these above theoretic results are applied to the CNNs model.
\section{Basic discussion}
\subsection{Threshold-type conclusion of the sign of $c^*$}
\noindent

Let $\lambda(\mu)$ be a function in $C^2([0,+\infty))$ with the following properties.
\newcounter{Hyp}
\setcounter{Hyp}{0}
\begin{enumerate}[(L1)]
\item $\lambda(\mu)>0$ for any $\mu \in [0,+\infty)$.
\item  $\lambda(0)>1$.
\item  $\ln \lambda(\mu)$ is convex with respect to $\mu\in [0,+\infty)$.
\setcounter{Hyp}{\value{enumi}}
\end{enumerate}

Then we can define $\Phi(\mu)$ and $\Psi(\mu)$ as following:
$$
\Phi(\mu):=\frac{\ln \lambda(\mu)}{\mu},~\mu\in(0,+\infty)\qquad
\Psi(\mu):=\frac{\lambda'(\mu)}{\lambda(\mu)},~\mu\in[0,+\infty)
$$
The following result is from \cite[Lemma 3.8]{liang2007asymptotic}.
\begin{lemma}\label{lem:basic_phi_psi}The following statements hold.
\begin{enumerate}[(i)]
\item\label{lem:item:tPhi:0} $\Phi(\mu)\rightarrow +\infty $ as $\mu\downarrow 0$ .
\item\label{lem:item:tPhi:0:decreasing} $\Phi(\mu)$ is decreasing with  respect to $\mu$ near $0$.
\item\label{lem:item:tPhi_mu} $\Phi'(\mu)$ changes sign at most once on $(0,+\infty)$.
\item\label{lem:item:tPsi} $\Psi(\mu)$ is increasing with respect to $\mu \in[0,+\infty)$ and $\lim\limits_{\mu\rightarrow +\infty} \Phi(\mu)=\lim\limits_{\mu\rightarrow +\infty} \Psi(\mu)$, where the limits may be infinite.
\end{enumerate}

\end{lemma}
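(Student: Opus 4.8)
The plan is to reduce all four statements to elementary properties of the convex function $g(\mu):=\ln\lambda(\mu)$. By (L3) this $g$ is convex and of class $C^2$ on $[0,+\infty)$, and by (L2) we have $g(0)=\ln\lambda(0)>0$; in these terms $\Phi(\mu)=g(\mu)/\mu$ and $\Psi(\mu)=g'(\mu)$. So $\Psi$ is just the derivative of $g$, while $\Phi$ is the slope of the secant of $g$ joining the origin to $(\mu,g(\mu))$. Once this reformulation is in place, each assertion becomes a standard monotonicity or limit fact about convex functions.

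For (i), since $g$ is continuous with $g(0)>0$, the numerator $g(\mu)$ stays bounded away from $0$ while the denominator $\mu\downarrow 0$, whence $\Phi(\mu)\to+\infty$. For (ii) and (iii) I would differentiate: a direct computation gives $\Phi'(\mu)=\bigl(\mu g'(\mu)-g(\mu)\bigr)/\mu^2$, so on $(0,+\infty)$ the sign of $\Phi'$ is governed entirely by the numerator $h(\mu):=\mu g'(\mu)-g(\mu)$. Because $h(0)=-g(0)<0$ and $h$ is continuous, $h<0$ near $0$, which gives (ii). Differentiating once more yields $h'(\mu)=\mu g''(\mu)\ge 0$ by convexity, so $h$ is nondecreasing and hence changes sign at most once on $(0,+\infty)$; since $\mu^2>0$, the same is true of $\Phi'$, which is (iii).

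For (iv), the monotonicity of $\Psi=g'$ is immediate from $\Psi'(\mu)=g''(\mu)\ge 0$. Being nondecreasing, $\Psi$ admits a limit $L\in(-\infty,+\infty]$ as $\mu\to+\infty$. To identify $\lim_{\mu\to+\infty}\Phi(\mu)$ with $L$, the cleanest route is L'H\^opital's rule applied to $\Phi(\mu)=g(\mu)/\mu$: the denominator tends to $+\infty$, so the rule applies irrespective of the behaviour of the numerator and gives $\lim_{\mu\to+\infty}\Phi(\mu)=\lim_{\mu\to+\infty}g'(\mu)=L=\lim_{\mu\to+\infty}\Psi(\mu)$.

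The only step that needs genuine care is this last one, because the common limit $L$ may be infinite. To keep the argument fully rigorous (and to cover $L=+\infty$), I would, if necessary, replace L'H\^opital by an elementary sandwich: for finite $L$ and any $\varepsilon>0$ choose $M$ with $L-\varepsilon\le g'(t)\le L$ for $t\ge M$, integrate over $[M,\mu]$ to obtain $(L-\varepsilon)(\mu-M)\le g(\mu)-g(M)\le L(\mu-M)$, divide by $\mu$ and let $\mu\to+\infty$; when $L=+\infty$ the analogous one-sided bound $g'(t)\ge N$ for $t\ge M$ forces $\liminf_{\mu\to+\infty}\Phi(\mu)\ge N$ for every $N$. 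This is the main, though still routine, obstacle.
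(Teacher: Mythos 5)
Your proof is correct and complete, but there is nothing in the paper to compare it against: the paper does not prove this lemma, it simply quotes it from \cite[Lemma~3.8]{liang2007asymptotic}. What you have written is the standard self-contained argument, and it holds up. Recasting everything through the convex function $g=\ln\lambda$ (so $\Psi=g'$ and $\Phi$ is the secant slope through the origin) reduces (i) to $g(0)=\ln\lambda(0)>0$, and reduces (ii)--(iii) to the sign analysis of the numerator $h(\mu)=\mu g'(\mu)-g(\mu)$ of $\Phi'$: since $h(0)=-g(0)<0$ and $h'(\mu)=\mu g''(\mu)\ge 0$, the function $h$ is negative near $0$ and nondecreasing, hence changes sign at most once, and $\Phi'$ inherits both properties. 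For (iv), your caution about the final limit is well placed: the na\"ive L'H\^opital step is in fact legitimate here (the strong form of the rule requires only that the denominator tend to $+\infty$ and that $\lim g'(\mu)$ exist in $(-\infty,+\infty]$), but your integration/sandwich argument, which handles $L$ finite and $L=+\infty$ separately, is the cleaner way to make it airtight, and the lower bound $L\ge\Psi(0)$ rules out $L=-\infty$. Two cosmetic remarks: the word ``increasing'' in the lemma must be read as nondecreasing, which is exactly what $\Psi'=g''\ge 0$ gives; and your auxiliary function $h(\mu)=\mu g'(\mu)-g(\mu)$ is, up to a swap of names, the same pair $h=\ln\lambda$, $g=\mu h'-h$ that the paper introduces just before Theorem~\ref{thm1.2}, so your computation also re-derives the identity $\mu\Phi'(\mu)=\Psi(\mu)-\Phi(\mu)$ that the paper uses repeatedly (e.g.\ in Proposition~\ref{prop:Phi_Psi}).
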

Now we define
$$
c^*:=\inf_{\mu>0}\Phi(\mu).
$$
For convenience, we denote
$$
\Phi(+\infty):=\lim\limits_{\mu\rightarrow +\infty} \Phi(\mu)\qquad \text{and} \qquad
\Psi(+\infty):=\lim\limits_{\mu\rightarrow +\infty} \Psi(\mu).
$$
\begin{prop}\label{prop:Phi_Psi}
	There exists $\mu^*\in(0,+\infty]$ such that  $c^*=\Phi(\mu^*)=\Psi(\mu^*)$.
\end{prop}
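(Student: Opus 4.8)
The plan is to exploit an exact identity relating $\Phi'$ to the gap $\Psi-\Phi$, and then locate the infimum by reading off the sign-change structure of $\Phi'$ supplied by Lemma~\ref{lem:basic_phi_psi}. First I would differentiate $\Phi(\mu)=\ln\lambda(\mu)/\mu$. Using $\Psi(\mu)=\lambda'(\mu)/\lambda(\mu)=(\ln\lambda)'(\mu)$ together with $\ln\lambda(\mu)=\mu\,\Phi(\mu)$, a direct computation yields
\[
\Phi'(\mu)=\frac{\mu\,\Psi(\mu)-\ln\lambda(\mu)}{\mu^2}=\frac{\Psi(\mu)-\Phi(\mu)}{\mu},\qquad \mu>0.
\]
The crucial consequence is that, for finite $\mu>0$, one has $\Phi'(\mu)=0$ \emph{if and only if} $\Phi(\mu)=\Psi(\mu)$. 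Hence any interior critical point of $\Phi$ automatically realizes the equality we are after.

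Next I would determine the global shape of $\Phi$ on $(0,+\infty)$. By Lemma~\ref{lem:basic_phi_psi}(\ref{lem:item:tPhi:0:decreasing}), $\Phi$ is decreasing near $0$, so $\Phi'<0$ there; combined with Lemma~\ref{lem:basic_phi_psi}(\ref{lem:item:tPhi_mu}), which forbids more than one sign change, this produces a clean dichotomy. Either (a) $\Phi'<0$ on all of $(0,+\infty)$, so $\Phi$ is strictly decreasing and $c^*=\Phi(+\infty)$; or (b) $\Phi'$ changes sign exactly once, necessarily from negative to positive, at a finite $\mu^*\in(0,+\infty)$, so $\Phi$ attains a global minimum there. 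The blow-up $\Phi(\mu)\to+\infty$ as $\mu\downarrow 0$ from Lemma~\ref{lem:basic_phi_psi}(\ref{lem:item:tPhi:0}) is what rules out the infimum being approached at the left endpoint.

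It then remains to resolve both alternatives. In case (b) I set $\mu^*$ to be the sign-change point; then $\Phi'(\mu^*)=0$, and the displayed identity immediately gives $c^*=\Phi(\mu^*)=\Psi(\mu^*)$ with $\mu^*\in(0,+\infty)$. In case (a) I take $\mu^*=+\infty$: here $c^*=\Phi(+\infty)$ by construction, and Lemma~\ref{lem:basic_phi_psi}(\ref{lem:item:tPsi}) furnishes the matching limit $\Phi(+\infty)=\Psi(+\infty)$ (possibly both $+\infty$), so that $c^*=\Phi(\mu^*)=\Psi(\mu^*)$ holds under the stated conventions for $\Phi(+\infty)$ and $\Psi(+\infty)$. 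Together these cover every possibility and deliver $\mu^*\in(0,+\infty]$ as claimed.

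The step I expect to be the main obstacle is making case (a) rigorous, namely ensuring that the infimum is genuinely captured at the extended endpoint $\mu^*=+\infty$ rather than remaining an unattained limit outside the framework. This is precisely where Lemma~\ref{lem:basic_phi_psi}(\ref{lem:item:tPsi}) does the essential work: the coincidence of the two limits $\Phi(+\infty)=\Psi(+\infty)$ is what legitimizes writing $\Phi(\mu^*)=\Psi(\mu^*)$ at $\mu^*=+\infty$ and lets the conclusion be stated uniformly across both alternatives. Everything else reduces to the elementary calculus identity and the monotonicity facts already recorded in Lemma~\ref{lem:basic_phi_psi}.
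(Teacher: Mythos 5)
Your proof is correct and takes essentially the same route as the paper's: both hinge on the identity $\mu\Phi'(\mu)=\Psi(\mu)-\Phi(\mu)$, handling a finite minimizer via the first-order condition $\Phi'(\mu^*)=0$ and the infinite case via Lemma~\ref{lem:basic_phi_psi}(iv), which identifies $\Phi(+\infty)$ with $\Psi(+\infty)$. Your added discussion of the sign-change dichotomy for $\Phi'$ merely makes explicit the existence of the minimizer $\mu^*\in(0,+\infty]$ that the paper dispatches with ``obviously.''
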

\begin{proof}
Obviously, there exists $\mu^*\in (0,+\infty]$ such that $c^*=\Phi(\mu^*)$.
 If $\mu^*$ is finite, we deduce that $\Phi'(\mu^*)=0$ as $\Phi(\mu^*)=\inf\limits_{\mu>0}\Phi(\mu)$. By virtue of $\mu\Phi'(\mu^*)=\Psi(\mu^*)-\Phi(\mu^*)$, we conclude that $\Phi(\mu^*)=\Psi(\mu^*)$. If $\mu^*$ is infinite, Lemma \ref{lem:basic_phi_psi}$(iv)$ shows that $\Phi(\mu^*)=\Psi(\mu^*)$. This completes the proof.
\end{proof}
\begin{remark}\label{rem:c^*&c_0}
If $\Phi(+\infty)$ is a finite constant, then $c^*-c_0=\inf\limits_{\mu>0} \frac{\ln(e^{-c_0\mu}\lambda (\mu))}{\mu}$ and $e^{-c_0\mu}\lambda (\mu)$ satisfies {\rm (L1)--(L3)}, where  $c_0=\Phi(+\infty)$.
\end{remark}
Without loss of generality, if $\Phi(+\infty)$ is a finite constant, we can assume that
\begin{enumerate}[(L1)]
\setcounter{enumi}{\value{Hyp}}
\item $\Phi(+\infty)=0.$
\end{enumerate}

If $\lim\limits_{\mu\rightarrow +\infty} \Phi(\mu)$ is infinite, it follows from Lemma \ref{lem:basic_phi_psi} that $\lim\limits_{\mu\rightarrow +\infty} \Phi(\mu)=+\infty$ and we set
\begin{enumerate}[(L1$'$)]
\setcounter{enumi}{\value{Hyp}}
\item $\Phi(+\infty) =+\infty.$
\setcounter{Hyp}{\value{enumi}}
\end{enumerate}

\begin{lemma}
Assume that {\rm (L4)} hold. Then $\lambda'(\mu)\leq 0$ for any $\mu \in [0,+\infty)$ and
 $\lambda(+\infty):=\lim\limits_{\mu \rightarrow +\infty} \lambda(\mu)$ exists.
\end{lemma}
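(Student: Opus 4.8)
The plan is to read off both conclusions from the sign of $\Psi$, using the identity $\Psi(\mu)=(\ln\lambda)'(\mu)$ together with the monotonicity and limit information already collected in Lemma \ref{lem:basic_phi_psi}$(iv)$.

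First I would record the limit of $\Psi$. By hypothesis (L4) we have $\Phi(+\infty)=0$, and Lemma \ref{lem:basic_phi_psi}$(iv)$ asserts $\lim_{\mu\to+\infty}\Phi(\mu)=\lim_{\mu\to+\infty}\Psi(\mu)$, so $\Psi(+\infty)=0$. The same part of the lemma tells us that $\Psi$ is increasing on $[0,+\infty)$; this is also immediate from the convexity hypothesis (L3), since $\Psi=(\ln\lambda)'$.

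The key observation is elementary: an increasing function whose limit at $+\infty$ equals $0$ cannot exceed that limit, so for every $\mu\in[0,+\infty)$ monotonicity gives $\Psi(\mu)\le\lim_{\nu\to+\infty}\Psi(\nu)=0$. Hence $\Psi(\mu)=\lambda'(\mu)/\lambda(\mu)\le 0$, and because $\lambda(\mu)>0$ by (L1), we conclude $\lambda'(\mu)\le 0$ for all $\mu\in[0,+\infty)$, which is the first assertion.

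For the existence of $\lambda(+\infty)$, I would simply note that $\lambda'\le 0$ makes $\lambda$ non-increasing on $[0,+\infty)$, while (L1) bounds it below by $0$; a non-increasing function bounded below converges, so $\lambda(+\infty)=\lim_{\mu\to+\infty}\lambda(\mu)$ exists (and is nonnegative). There is no substantive obstacle in this argument. The only point that deserves a moment's attention is the passage forcing $\Psi\le 0$, namely that an increasing function cannot overshoot its limit at infinity; everything else is a direct consequence of (L1) and Lemma \ref{lem:basic_phi_psi}$(iv)$.
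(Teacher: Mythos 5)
Your proposal is correct and follows essentially the same route as the paper: both use Lemma \ref{lem:basic_phi_psi}$(iv)$ to get that $\Psi$ is increasing with $\Psi(+\infty)=\Phi(+\infty)=0$, deduce $\Psi\le 0$ hence $\lambda'=\Psi\lambda\le 0$, and then conclude the limit of $\lambda$ exists by monotonicity. Your write-up is in fact slightly more careful than the paper's, which leaves implicit both the identification $\Psi(+\infty)=0$ and the lower bound $\lambda>0$ needed for convergence.
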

\begin{proof}
Since $\Psi(\mu)$ is increasing with respect to $\mu$, we derive that $\Psi(\mu)\leq \lim\limits_{\mu\rightarrow +\infty} \Psi(\mu)=0$ and $\lambda'(\mu)=\Psi(\mu)\lambda(\mu) \leq 0$ for any $\mu \in (0,+\infty)$. Therefore, $\lim\limits_{\mu \rightarrow +\infty} \lambda(\mu)$ exists .
\end{proof}

\begin{thm}\label{thm1.1}
Assume that {\rm (L4)} hold. Then the following assertions hold:
\begin{enumerate}[(i)]
\item If $\lambda(+\infty)\geq 1$, then $c^*=0$.
\item If $\lambda(+\infty)< 1$, then $c^*<0$.
\end{enumerate}
\end{thm}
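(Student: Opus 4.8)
The plan is to exploit the monotonicity of $\lambda$ together with the normalization $\Phi(+\infty)=0$ imposed by (L4). By the preceding lemma, (L4) forces $\lambda'(\mu)\le 0$ on $[0,+\infty)$, so $\lambda$ is non-increasing, and $\lambda(+\infty)=\lim_{\mu\to+\infty}\lambda(\mu)$ exists (and is a finite non-negative number, being the limit of a positive decreasing function). Monotonicity then yields the comparison $\lambda(\mu)\ge\lambda(+\infty)$ for every $\mu\ge 0$, which is the single structural fact driving both parts. I would isolate this comparison at the outset, since everything afterwards is sign bookkeeping for $\Phi(\mu)=\ln\lambda(\mu)/\mu$.

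For part (i), assume $\lambda(+\infty)\ge 1$. Then $\lambda(\mu)\ge\lambda(+\infty)\ge 1$ for all $\mu$, hence $\ln\lambda(\mu)\ge 0$ and $\Phi(\mu)\ge 0$ on $(0,+\infty)$; taking the infimum gives $c^*\ge 0$. For the reverse inequality I would use that $c^*=\inf_{\mu>0}\Phi(\mu)$ is bounded above by any limiting value of $\Phi$, in particular by $\Phi(+\infty)=0$, which is precisely (L4). Combining $c^*\ge 0$ with $c^*\le 0$ yields $c^*=0$.

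For part (ii), assume $\lambda(+\infty)<1$. Since $\lambda(\mu)\to\lambda(+\infty)<1$ as $\mu\to+\infty$, there exists some $\mu_0>0$ with $\lambda(\mu_0)<1$, so $\ln\lambda(\mu_0)<0$ and therefore $\Phi(\mu_0)<0$. Because $c^*$ is an infimum over $(0,+\infty)$, we conclude $c^*\le\Phi(\mu_0)<0$, the desired strict inequality.

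I do not anticipate a genuine obstacle: once monotonicity of $\lambda$ is in hand, both assertions reduce to elementary estimates on the sign of $\ln\lambda(\mu)/\mu$. The only point deserving a word of care is the upper bound $c^*\le\Phi(+\infty)$ in part (i), where one passes to the limit $\mu\to+\infty$ inside the infimum; this is immediate from the definition of infimum, since $\Phi(\mu)$ can be made arbitrarily close to $0$ for large $\mu$ by (L4). I would also remark that no appeal to the finer structure of $\Phi$, such as where its infimum $\mu^*$ is attained (Proposition \ref{prop:Phi_Psi}), is actually required, although the conclusion is of course consistent with it.
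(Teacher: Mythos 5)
Your proof is correct and follows essentially the same route as the paper: the pointwise bound $\lambda(\mu)\ge\lambda(+\infty)$ coming from $\lambda'\le 0$ gives $\Phi\ge 0$ in part (i), and a single point $\mu_0$ with $\lambda(\mu_0)<1$ gives the strict negativity in part (ii). The only difference is that you spell out the upper bound $c^*\le\Phi(+\infty)=0$ needed to close part (i), a step the paper leaves implicit in its ``Therefore, $c^*=0$''; making it explicit is a small improvement in rigor, not a different argument.
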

\begin{proof}$(i)$. By virtue of $\lambda(+\infty)\geq 1$ and $\lambda'(\mu) \leq 0$ for any $\mu \in [0,+\infty)$, we deduce that  $\Phi(\mu)=\frac{\ln \lambda (\mu)}{\mu}\geq 0$ for any $\mu \in [0,\infty) $. Therefore, $c^* = 0$.

$(ii)$. Since  $\lambda(+\infty) < 1$, it follows that there exists $\mu_0\in [0,+\infty)$ such that $\lambda(\mu_0)<1$. We conclude $c^*\leq \Phi(\mu_0)=\frac{\ln\lambda(\mu_0)}{\mu_0}<0$.
\end{proof}

For any $\mu \in[0,+\infty)$, let
$$
h(\mu)=\ln \lambda (\mu),~ \qquad
g(\mu)=h' (\mu) \mu - h(\mu).~
$$
\begin{thm}\label{thm1.2}
Assume that {\rm (L4$'$)} hold. Then there is $\mu^*\in (0,+\infty)$ such that $g(\mu^*)=0$ and
\begin{enumerate}[(i)]
\item If $h(\mu^*)>0$, then $c^*>0$.
\item If $h(\mu^*)=0$, then $c^*=0$.
\item If $h(\mu^*)<0$, then $c^*<0$.
\end{enumerate}
\end{thm}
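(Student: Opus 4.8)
The plan is to locate the minimizer of $\Phi$ and to read off both the identity $g(\mu^*)=0$ and the sign of $c^*$ from the first-order condition at that point. First I would invoke Proposition \ref{prop:Phi_Psi}, which already produces a point $\mu^*\in(0,+\infty]$ with $c^*=\Phi(\mu^*)=\Psi(\mu^*)$. The role of hypothesis {\rm (L4$'$)} is precisely to exclude the value $\mu^*=+\infty$: for any fixed $\mu_0>0$ we have $c^*\le\Phi(\mu_0)=h(\mu_0)/\mu_0<+\infty$, whereas {\rm (L4$'$)} gives $\Phi(+\infty)=+\infty$, so the infimum cannot be attained at $+\infty$. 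Hence $\mu^*\in(0,+\infty)$ is finite. Equivalently, $\Phi$ is continuous on $(0,+\infty)$ and tends to $+\infty$ at both endpoints, by Lemma \ref{lem:basic_phi_psi}$(i)$ near $0$ and by {\rm (L4$'$)} at $+\infty$, so it attains its minimum at an interior point.

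Next I would exploit the elementary identity tying $g$ to $\Phi'$. Since $\Phi(\mu)=h(\mu)/\mu$, differentiation gives
$$\Phi'(\mu)=\frac{h'(\mu)\mu-h(\mu)}{\mu^2}=\frac{g(\mu)}{\mu^2},$$
so that $g(\mu)=\mu^2\Phi'(\mu)$ on $(0,+\infty)$; since $h'(\mu)=\lambda'(\mu)/\lambda(\mu)=\Psi(\mu)$, this is exactly the relation $\mu\Phi'(\mu)=\Psi(\mu)-\Phi(\mu)$ already used in the proof of Proposition \ref{prop:Phi_Psi}. Because $\mu^*$ is an interior minimizer of the differentiable function $\Phi$, we have $\Phi'(\mu^*)=0$, and therefore $g(\mu^*)=(\mu^*)^2\Phi'(\mu^*)=0$. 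This settles the existence claim.

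The three-way dichotomy is then immediate for this fixed $\mu^*$. By construction $c^*=\Phi(\mu^*)=h(\mu^*)/\mu^*$ with $\mu^*>0$, so $c^*$ and $h(\mu^*)$ carry the same sign. Consequently $h(\mu^*)>0$, $h(\mu^*)=0$, and $h(\mu^*)<0$ yield respectively $c^*>0$, $c^*=0$, and $c^*<0$, which are cases $(i)$, $(ii)$, $(iii)$.

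I do not anticipate a serious obstacle here. The only point demanding care is the finiteness of $\mu^*$, that is, verifying that {\rm (L4$'$)} genuinely forbids the minimizer from escaping to $+\infty$ so that the first-order condition $\Phi'(\mu^*)=0$ is available; this is exactly what distinguishes the present setting from that of Theorem \ref{thm1.1}, where the relevant behavior occurs as $\mu\to+\infty$. Once the minimizer is known to be interior, the remainder is just the algebra of $g=\mu^2\Phi'$ together with $c^*=h(\mu^*)/\mu^*$.
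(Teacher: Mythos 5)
Your proposal is correct and follows essentially the same route as the paper: both arguments use the blow-up of $\Phi$ at $0$ (Lemma \ref{lem:basic_phi_psi}$(i)$) and at $+\infty$ (hypothesis (L4$'$)) to secure a finite interior minimizer $\mu^*$, then apply the first-order condition $\Phi'(\mu^*)=0$ together with the identity $g(\mu)=\mu^2\Phi'(\mu)$ to get $g(\mu^*)=0$, and finally read the sign of $c^*$ from $c^*=\Phi(\mu^*)=h(\mu^*)/\mu^*$. The only difference is presentational: you spell out the exclusion of $\mu^*=+\infty$ via Proposition \ref{prop:Phi_Psi}, where the paper simply cites Lemma \ref{lem:basic_phi_psi}$(i)(iii)$ and (L4$'$).
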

\begin{proof}
By virtue of Lemma \ref{lem:basic_phi_psi}$(i) (iii)$ and {\rm (L4$'$)}, we have there is $\mu^*\in (0,+\infty)$ such that $c^*=\inf_{\mu>0} \Phi(\mu)=\Phi(\mu^*)$ and $\Phi'(\mu^*)=0$, which implies that $g(\mu^*)=0$. The rest parts can be easily checked by $c^*=\Phi(\mu^*)=\frac{h(\mu^*)}{\mu^*}$.
\end{proof}
\begin{remark}
If $\lambda(\mu)$ satisfies {\rm (L4$'$)} and $\ln \lambda(\mu)$ is strictly convex in $[0,\infty)$, then $\Psi(\mu)$ is strictly increasing in $[0,\infty)$, which implies that there exists a unique $\mu^*>0$ such $c^*=\Phi(\mu^*)$.
\end{remark}
\begin{cor}\label{cor:sign}Assume that {\rm (L4$'$)} hold and let $h_0:=\inf\limits_{\mu>0}h(\mu)(=\min\limits_{\mu\geq 0} h(\mu))$. The following conclusions hold.
\begin{enumerate}[(i)]
\item If $h_0>0$ then $c^*>0$.
\item If $h_0=0$ then $c^*=0$.
\item If $h_0<0$ then $c^*<0$.
\end{enumerate}
\end{cor}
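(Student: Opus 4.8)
The plan is to combine Theorem \ref{thm1.2} with the variational characterization $c^*=\inf_{\mu>0} h(\mu)/\mu$, reading $\Phi(\mu)=h(\mu)/\mu$ as the slope of the secant from the origin to the point $(\mu,h(\mu))$. First I would check that $h_0$ is a genuine minimum, as asserted in the statement: since $h=\ln\lambda$ is convex by {\rm (L3)}, satisfies $h(0)=\ln\lambda(0)>0$ by {\rm (L2)}, and has derivative $h'(\mu)=\lambda'(\mu)/\lambda(\mu)=\Psi(\mu)$ with $\Psi(+\infty)=+\infty$ under {\rm (L4$'$)} (by Lemma \ref{lem:basic_phi_psi}$(iv)$), the function $h$ is coercive on $[0,+\infty)$ and hence attains its infimum at some $\mu_0\in[0,+\infty)$. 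Moreover, because $h(0)>0$, this minimizer satisfies $\mu_0>0$ whenever $h_0\le 0$.

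Next I would isolate two elementary estimates for $c^*$. The lower estimate: if $h_0\ge 0$, then $h(\mu)\ge 0$ for every $\mu$, so $\Phi(\mu)=h(\mu)/\mu\ge 0$ and therefore $c^*\ge 0$. The upper estimate: evaluating the infimum at the minimizer $\mu_0$ gives $c^*\le\Phi(\mu_0)=h_0/\mu_0$, whose sign coincides with that of $h_0$ because $\mu_0>0$.

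With these two estimates the three cases fall out directly. For $(iii)$, $h_0<0$: the upper estimate yields $c^*\le h_0/\mu_0<0$. For $(ii)$, $h_0=0$: here $\mu_0>0$ since $h(0)>0$, so the lower estimate gives $c^*\ge 0$ while the upper estimate gives $c^*\le h_0/\mu_0=0$, whence $c^*=0$. For $(i)$, $h_0>0$: the lower estimate only delivers $c^*\ge 0$, and the genuine work is to upgrade this to strict positivity. At this point I would invoke Theorem \ref{thm1.2}: under {\rm (L4$'$)} the infimum defining $c^*$ is attained at a finite $\mu^*\in(0,+\infty)$, so that $c^*=\Phi(\mu^*)=h(\mu^*)/\mu^*$; since $h(\mu^*)\ge h_0>0$ and $0<\mu^*<+\infty$, we conclude $c^*>0$.

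I expect case $(i)$ to be the only real obstacle. Pointwise positivity $h>0$ gives merely $\Phi>0$ pointwise, which a priori still allows $\inf_{\mu>0}\Phi(\mu)=0$; excluding this degeneracy requires the infimum to be realized at a \emph{finite} argument, which is exactly what {\rm (L4$'$)} (the condition $\Phi(+\infty)=+\infty$) provides through Theorem \ref{thm1.2}. By contrast, cases $(ii)$ and $(iii)$ rely only on the convexity and positivity structure and the coercivity of $h$, so the hypothesis {\rm (L4$'$)} is essential precisely in the strict positivity statement.
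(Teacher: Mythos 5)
Your proof is correct and is essentially the paper's own argument: both hinge on the finite minimizer $\mu^*\in(0,+\infty)$ of $\Phi$ supplied by (L4$'$) to settle the case $h_0>0$ via $c^*=h(\mu^*)/\mu^*\geq h_0/\mu^*>0$, and on the finite minimizer $\mu_0>0$ of $h$ (positive because $h(0)>0$) to handle the cases $h_0\leq 0$ via $c^*\leq\Phi(\mu_0)=h_0/\mu_0$. The only cosmetic difference is that you spell out the coercivity of $h$ and package the argument as explicit upper/lower estimates, whereas the paper states the same bounds inline.
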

\begin{proof}
We choose $\mu^*\in (0,+\infty)$ such that $\inf_{\mu>0} \Phi(\mu)=\Phi(\mu^*)$. The infimum of $h(\mu)$ on $(0,+\infty)$ can be obtained at $0\leq \mu_0<+\infty$ due to {\rm (L4$'$)}.
In the case where $h_0>0$, $c^*=\Phi(\mu^*)=\frac{h(\mu^*)}{\mu^*}\geq \frac{h_0}{\mu^*}>0.$
In the case where $h_0\leq 0$, we have $\mu_0>0$ from $h(0)>0$.
If $h_0=0$, we deduce $c^*=0$ from  $0=\frac{h_0}{\mu^*}\leq \frac{h(\mu^*)}{\mu^*}=\Phi(\mu^*)\leq\Phi(\mu_0)=0$.
If $h_0<0$, then $c^*=\Phi(\mu^*)\leq \Phi(\mu_0)<0.$

\end{proof}

\subsection{Continuity of $c^*$ from above and below}
Throughout this subsection, let $\lambda \in C^2([0,+\infty))$ and $\lambda_n\in C^2([0,+\infty))$ satisfy {\rm (L1)--(L3)} and $\lambda_n(\mu)$ converges to $\lambda(\mu)$ as $n \rightarrow +\infty$ for any $\mu\in [0,+\infty)$ from above or below. Then $\Phi(\mu)$, $\Psi(\mu)$, $\Phi_n(\mu)$, $\Psi_n(\mu)$, $c^*$ and $c^*_n$ can be defined as follows:
$$
\Phi(\mu):=\frac{\ln \lambda(\mu)}{\mu},~\mu\in(0,+\infty),\qquad
\Psi(\mu):=\frac{\lambda'(\mu)}{\lambda(\mu)},~\mu\in[0,+\infty),
$$
$$
\Phi_n(\mu):=\frac{\ln \lambda_n(\mu)}{\mu},~\mu\in(0,+\infty),\qquad
\Psi_n(\mu):=\frac{\lambda_{n}'(\mu)}{\lambda_n(\mu)},~\mu\in[0,+\infty),
$$

$$
c^*:=\inf_{\mu>0}\Phi(\mu),\qquad
c^*_n:=\inf_{\mu>0}\Phi_n(\mu).
$$
By Dini's theorem, we have the following consequence.
\begin{lemma}\label{lem:Psi_Phi:con}
Assume that $\lambda_n (\mu) \geq \lambda_{n+1} (\mu)\geq \lambda (\mu)$ or $\lambda_n (\mu) \leq \lambda_{n+1} (\mu) \leq \lambda (\mu)$ for all $n\geq 1$ and $\mu \in [0,+\infty)$. Then $\Phi_n(\mu)$ converges to $\Phi(\mu)$ uniformly as $n \rightarrow +\infty$ on any closed bounded subset of $(0,+\infty)$.
\end{lemma}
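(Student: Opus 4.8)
The plan is to recognize the statement as a direct application of Dini's theorem and to verify its three hypotheses. Fix a closed bounded subset $K\subset(0,+\infty)$. Being closed and bounded in $\mathbb{R}$, the set $K$ is compact by the Heine--Borel theorem, and since it is contained in the open interval $(0,+\infty)$ it is bounded away from the origin: there exist $0<a\le b<+\infty$ with $K\subset[a,b]$, so that $1/\mu$ stays bounded on $K$. Dini's theorem then says that a sequence of continuous functions on a compact set which is monotone in $n$ at each point and converges pointwise to a continuous limit converges uniformly. Accordingly, I must check on $K$ that (a) every $\Phi_n$ and the limit $\Phi$ are continuous, (b) the sequence $\{\Phi_n(\mu)\}_n$ is monotone in $n$ for each fixed $\mu$, and (c) $\Phi_n(\mu)\to\Phi(\mu)$ pointwise.

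Continuity is immediate: by (L1) the functions $\lambda_n$ and $\lambda$ are strictly positive, and being in $C^2([0,+\infty))$ they are continuous, so $\Phi_n(\mu)=\tfrac{\ln\lambda_n(\mu)}{\mu}$ and $\Phi(\mu)=\tfrac{\ln\lambda(\mu)}{\mu}$ are continuous on $(0,+\infty)$ as quotients by the continuous nonvanishing function $\mu$. For the monotonicity in $n$, note that $\ln$ is strictly increasing and $\mu>0$ on $(0,+\infty)$; hence the hypothesis $\lambda_n(\mu)\ge\lambda_{n+1}(\mu)\ge\lambda(\mu)$ gives $\ln\lambda_n(\mu)\ge\ln\lambda_{n+1}(\mu)\ge\ln\lambda(\mu)$, and dividing by $\mu>0$ yields $\Phi_n(\mu)\ge\Phi_{n+1}(\mu)\ge\Phi(\mu)$, so $\{\Phi_n(\mu)\}_n$ decreases; the reverse chain $\lambda_n(\mu)\le\lambda_{n+1}(\mu)\le\lambda(\mu)$ symmetrically forces $\{\Phi_n(\mu)\}_n$ to increase. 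Finally, since $\lambda_n(\mu)\to\lambda(\mu)>0$ and $\ln$ is continuous at $\lambda(\mu)$, we obtain $\ln\lambda_n(\mu)\to\ln\lambda(\mu)$ and, dividing by the fixed $\mu>0$, the pointwise limit $\Phi_n(\mu)\to\Phi(\mu)$.

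With compactness of $K$, continuity of all $\Phi_n$ and of $\Phi$, pointwise monotone convergence in $n$, and pointwise convergence to $\Phi$ all established, Dini's theorem applies on $K$ and delivers the asserted uniform convergence. The argument is essentially bookkeeping, and the only point genuinely requiring care is the restriction to sets bounded away from $0$: by Lemma~\ref{lem:basic_phi_psi}(i) one has $\Phi(\mu)\to+\infty$ as $\mu\downarrow 0$, so uniform convergence cannot persist up to the endpoint $\mu=0$, and it is precisely the hypothesis that $K$ is a closed bounded subset of the \emph{open} interval $(0,+\infty)$ that guarantees both the compactness needed for Dini and the separation from the singularity of $\Phi$ at the origin.
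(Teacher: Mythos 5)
Your proposal is correct and follows exactly the paper's approach: the paper justifies this lemma with the single remark ``By Dini's theorem,'' and your argument simply makes explicit the verification of Dini's hypotheses (compactness of the closed bounded set $K\subset(0,+\infty)$, continuity of $\Phi_n$ and $\Phi$ there, monotonicity in $n$ inherited from the ordering of the $\lambda_n$, and pointwise convergence from the subsection's standing assumption). Nothing is missing and nothing differs in substance from the paper's intended proof.
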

	
We first discuss the case where  $\inf\limits_{\mu>0} \Phi(\mu)<\Phi(+\infty)$.
\begin{prop}\label{prop:conti:inf<}
Assume that $\lambda_n (\mu) \geq \lambda_{n+1} (\mu)\geq \lambda (\mu)$ or $\lambda_n (\mu) \leq \lambda_{n+1} (\mu) \leq \lambda (\mu)$ for all $n\geq 1$ and $\mu \in [0,+\infty)$.
If there exists  $\inf\limits_{\mu > 0}\Phi(\mu)<\Phi(+\infty)$, then $\lim\limits_{n\rightarrow +\infty} c_n^*=c^*$.
\end{prop}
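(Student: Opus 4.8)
The plan is to localize the infimum to a fixed compact interval, where Lemma~\ref{lem:Psi_Phi:con} gives uniform convergence, and then to handle the two tails (near $0$ and near $+\infty$) by hand. First I would record the geometry forced by the hypothesis $c^*=\inf_{\mu>0}\Phi(\mu)<\Phi(+\infty)$. By Lemma~\ref{lem:basic_phi_psi}(i)--(iii), $\Phi(\mu)\to+\infty$ as $\mu\downarrow0$ and $\Phi'$ changes sign at most once; if $\Phi'$ never became positive then $\Phi$ would be decreasing throughout and $\inf_{\mu>0}\Phi(\mu)=\Phi(+\infty)$, contradicting the hypothesis. Hence the infimum is attained at a finite interior point $\mu^*\in(0,+\infty)$ with $c^*=\Phi(\mu^*)$. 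I then fix a compact interval $[a,b]\subset(0,+\infty)$ with $a<\mu^*<b$, so that $\inf_{\mu\in[a,b]}\Phi(\mu)=\Phi(\mu^*)=c^*$.

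The easy inequality $\limsup_{n\to+\infty}c^*_n\le c^*$ holds in both monotonicity regimes: since $c^*_n=\inf_{\mu>0}\Phi_n(\mu)$ is an infimum over a larger set, $c^*_n\le\inf_{\mu\in[a,b]}\Phi_n(\mu)$, and the right-hand side converges to $\inf_{\mu\in[a,b]}\Phi(\mu)=c^*$ by the uniform convergence on $[a,b]$ supplied by Lemma~\ref{lem:Psi_Phi:con}.

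The substance is the reverse inequality $\liminf_{n\to+\infty}c^*_n\ge c^*$. In the convergence-from-above case $\lambda_n\ge\lambda$ gives $\Phi_n\ge\Phi$, so $c^*_n\ge c^*$ is automatic; the work is therefore in the from-below case $\lambda_1\le\lambda_2\le\cdots\le\lambda$, which I would treat by contradiction. Suppose a subsequence satisfies $c^*_{n_k}\to L<c^*$, and choose near-minimizers $\mu_k\in(0,+\infty)$ with $\Phi_{n_k}(\mu_k)\to L$. Passing to a further subsequence, I split into three cases. If $\mu_k\to\mu_\infty\in(0,+\infty)$, uniform convergence on a compact neighbourhood of $\mu_\infty$ yields $\Phi_{n_k}(\mu_k)\to\Phi(\mu_\infty)\ge c^*>L$, a contradiction. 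If $\mu_k\to0$, then $\lambda_{n_k}\ge\lambda_1$ gives $\Phi_{n_k}(\mu_k)\ge\Phi_1(\mu_k)\to+\infty$ by Lemma~\ref{lem:basic_phi_psi}(i), again contradicting $\Phi_{n_k}(\mu_k)\to L$.

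The remaining case $\mu_k\to+\infty$ is the main obstacle: near $+\infty$ the uniform estimate of Lemma~\ref{lem:Psi_Phi:con} is unavailable, and $\Phi_n\le\Phi$ only provides an upper bound. Here I would exploit the log-convexity hypothesis {\rm(L3)}. Writing $h_n:=\ln\lambda_n$, convexity of $h_n$ makes the secant slope $\mu\mapsto\bigl(h_n(\mu)-h_n(0)\bigr)/\mu$ nondecreasing, so for any fixed $\mu_1>0$ and all $\mu_k\ge\mu_1$,
$$
\Phi_{n_k}(\mu_k)=\frac{h_{n_k}(\mu_k)}{\mu_k}\ge\frac{h_{n_k}(\mu_k)-h_{n_k}(0)}{\mu_k}\ge\frac{h_{n_k}(\mu_1)-h_{n_k}(0)}{\mu_1},
$$
where the first inequality uses $h_n(0)=\ln\lambda_n(0)>0$ from {\rm(L2)}. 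Letting $k\to+\infty$ and invoking the pointwise convergence $\lambda_n\to\lambda$ gives $L\ge\bigl(h(\mu_1)-h(0)\bigr)/\mu_1$; letting $\mu_1\to+\infty$ then yields $L\ge\Phi(+\infty)>c^*$, the final contradiction. Combining this with the easy half gives $c^*_n\to c^*$. The one delicate point to check is the order of limits: $\mu_1$ must be held fixed while $k\to+\infty$ (so that pointwise convergence applies at the two points $0$ and $\mu_1$), and only afterwards is $\mu_1\to+\infty$ taken, which is exactly how the convexity bound is arranged above.
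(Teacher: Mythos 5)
Your proof is correct, and it takes a genuinely different route at the crucial step. Like the paper, you first use $c^*<\Phi(+\infty)$ together with Lemma \ref{lem:basic_phi_psi} to place the infimum of $\Phi$ at a finite point $\mu^*$, and you finish with the uniform convergence of Lemma \ref{lem:Psi_Phi:con} on a compact interval; but the mechanism preventing minimizing sequences from escaping is different. The paper proves a single claim covering both monotonicity regimes at once: choosing $\mu_1<\mu^*<\mu_2$ with $\Phi(\mu_1)>\Phi(\mu^*)+3\epsilon_0$ and $\Phi(\mu_2)>\Phi(\mu^*)+3\epsilon_0$, pointwise convergence at just the three points $\mu_1,\mu^*,\mu_2$ forces $\Phi_n(\mu_1)>\Phi_n(\mu^*)$ and $\Phi_n(\mu_2)>\Phi_n(\mu^*)$ for large $n$, so $\Phi_n'$ is negative somewhere on $[\mu_1,\mu^*]$ and positive somewhere on $[\mu^*,\mu_2]$; since $\Phi_n'$ changes sign at most once (Lemma \ref{lem:basic_phi_psi}(iii)), the exact minimizer $\mu_n^*$ of $\Phi_n$ is trapped in $[\mu_1,\mu_2]$, and uniform convergence there gives $c_n^*\to c^*$ in one stroke. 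You instead split the regimes: from above, $\Phi_n\ge\Phi$ makes $c_n^*\ge c^*$ free; from below, you argue by contradiction with near-minimizers, excluding escape to $0$ by comparison with $\Phi_1$ and escape to $+\infty$ by the secant-slope inequality supplied by log-convexity (L3), which bounds $\Phi_{n_k}(\mu_k)$ below by quantities tending to $\Phi(+\infty)$. The paper's argument is shorter and needs no case distinction, but it leans on the unimodality property (iii) of each $\Phi_n$ and on attainment of the infima $\Phi_n(\mu_n^*)$. Your argument replaces (iii) by raw convexity of $\ln\lambda_n$ plus the monotone ordering, never needs $\inf_{\mu>0}\Phi_n(\mu)$ to be attained, and isolates exactly where the hypothesis $c^*<\Phi(+\infty)$ enters, namely in defeating minimizing sequences that drift to $+\infty$. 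Both proofs rest on the same two pillars: pointwise convergence off a compact core and Lemma \ref{lem:Psi_Phi:con} on it.
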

\begin{proof}
Since $\inf\limits_{\mu > 0}\Phi(\mu)<\Phi(+\infty)$ and $\lambda(\mu)$ satisfies {\rm (L1)-(L3)}, there exists $\mu^*\in (0,+\infty)$ such that
$c^*=\inf\limits_{\mu>0}\Phi(\mu)=\Phi(\mu^*)$. Taking $\mu^*_n\in (0,+\infty]$ such that $c^*_n=\inf\limits_{\mu>0}\Phi_n(\mu)=\Phi_n(\mu^*_n)$, we have the following claim.
{\\ \it Claim:} There exist $\mu_1>0$ and $\mu_2>0$ such that $\mu_n^* \in [\mu_1,\mu_2]$ for $n$ large enough.

Fix an $\epsilon_0>0$. It follows from Lemma  \ref{lem:basic_phi_psi} $(i)$ and $\inf_{\mu > 0}\Phi(\mu)<\Phi(+\infty)$ that there are $\mu_1< \mu_*$ and $\mu_2>\mu_*$ such that $\Phi(\mu_1)>\Phi(\mu_*)+3\epsilon_0$ and $\Phi(\mu_2)>\Phi(\mu_*)+3\epsilon_0$. Note that $\lambda_n(\mu)$ converges to $\lambda(\mu)$ as $n \rightarrow +\infty$ for any $\mu\in [0,+\infty)$. There exists some integer $N>0$ such that $\Phi_n(\mu^*) \leq \Phi(\mu^*)+\epsilon_0$, $\Phi_n(\mu_1) \geq \Phi(\mu_1)-\epsilon_0$ and $\Phi_n(\mu_2) \geq \Phi(\mu_2)-\epsilon_0$ for all $n\geq N$. We then have $\Phi_n(\mu_1) >\Phi_n(\mu_*)$ and $\Phi_n(\mu_2) >\Phi_n(\mu_*)$
for all $n \geq N$. Hence there exist $\nu_n^1 \in [\mu_1,\mu^*]$ and $\nu_n^2 \in [\mu^*,\mu_2]$  such that $\Phi_n'(\nu_n^1)<0$ and $\Phi_n'(\nu_n^2)>0$ for all $n \geq N$. Lemma \ref{lem:basic_phi_psi}$(iii)$ implies $\mu_n^*< +\infty, ~ \forall n \geq N$. We also deduce that $\mu^*_n \in (\nu_n^1,\nu_n^2) \subset [\mu_1,\mu_2], ~ \forall n \geq N$ due to  Lemma \ref{lem:basic_phi_psi}$(iii)$ and $\Phi_n'(\mu^*_n)=0$. The claim is proved.

Therefore, the desired conclusion can be derived by Lemma \ref{lem:Psi_Phi:con} and the above claim.
\end{proof}
Now we begin to investigate the case where $\inf\limits_{\mu>0} \Phi(\mu)=\Phi(+\infty)$.
\begin{prop}\label{prop:conti:inf=:de}
Assume that $\lambda_n (\mu) \geq \lambda_{n+1} (\mu)\geq \lambda (\mu)$ for all $n\geq 1$ and $\mu \in [0,+\infty)$. If $\inf\limits_{\mu>0} \Phi(\mu)=\Phi(+\infty)$, then $\lim\limits_{n\rightarrow +\infty} \cn=c^*$.
\end{prop}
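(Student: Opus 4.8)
The plan is to sandwich $c^*_n$ between two quantities that both tend to $c^*$, taking advantage of the fact that here the convergence $\lambda_n\to\lambda$ is monotone \emph{from above}. A preliminary observation is that in this regime $c^*$ is a finite number equal to $\Phi(+\infty)$: since $\Phi'$ changes sign at most once and $\Phi$ is decreasing near $0$ (Lemma \ref{lem:basic_phi_psi}), the hypothesis $\inf_{\mu>0}\Phi(\mu)=\Phi(+\infty)$ forces $\Phi$ to be nonincreasing on $(0,+\infty)$, so $c^*=\Phi(+\infty)=\Psi(+\infty)$ lies in the finite range between $\Psi(0)$ and $+\infty$. This keeps the inequalities below well defined.

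For the lower bound I would use pointwise domination only. Since $\lambda_n(\mu)\ge\lambda(\mu)$ for every $\mu$, taking logarithms and dividing by $\mu>0$ gives $\Phi_n(\mu)\ge\Phi(\mu)$ on $(0,+\infty)$, and hence $c^*_n=\inf_{\mu>0}\Phi_n(\mu)\ge\inf_{\mu>0}\Phi(\mu)=c^*$ for every $n$. In particular $\liminf_{n\to+\infty}c^*_n\ge c^*$. (The chain $\lambda_n\ge\lambda_{n+1}$ moreover makes $c^*_n$ nonincreasing in $n$, so the limit exists, though this is not needed for the conclusion.)

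For the upper bound the key idea is to never locate the minimizers $\mu^*_n$, which is precisely the delicate point of this case: the infimum of $\Phi$ need not be attained at any finite $\mu$, so the minimizing sequence may escape to $+\infty$. Instead I would fix an arbitrary $\mu>0$ as a test point and estimate $c^*_n\le\Phi_n(\mu)=\frac{\ln\lambda_n(\mu)}{\mu}$. Because $\lambda_n(\mu)\to\lambda(\mu)>0$ and $\ln$ is continuous, $\Phi_n(\mu)\to\Phi(\mu)$, whence $\limsup_{n\to+\infty}c^*_n\le\Phi(\mu)$; since $\mu>0$ was arbitrary, $\limsup_{n\to+\infty}c^*_n\le\inf_{\mu>0}\Phi(\mu)=c^*$. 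Combining this with the lower bound yields $\lim_{n\to+\infty}c^*_n=c^*$.

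I expect the only conceptual obstacle to be the possible escape of the minimizing sequence to infinity, which is exactly what prevents one from reusing the bounded-interval argument of Proposition \ref{prop:conti:inf<} (there the minimizer was finite and $\mu^*_n$ could be trapped in a compact interval, where Lemma \ref{lem:Psi_Phi:con} supplies uniform convergence). In the present decreasing case this obstacle simply dissolves: monotone domination hands us the lower bound, and testing against each fixed $\mu$ hands us the upper bound, so neither direction ever refers to $\mu^*_n$. I anticipate that the real effort is reserved for the symmetric increasing case ($\lambda_n\le\lambda$), where the easy inequality reverses and the lower bound $\liminf_{n\to+\infty} c^*_n\ge c^*$ must instead be recovered from the asymptotics of $\Phi_n$ as $\mu\to+\infty$.
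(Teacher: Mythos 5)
Your proof is correct and is essentially the paper's own argument: the paper also gets the lower bound $c^*_n\ge c^*$ directly from the domination $\lambda_n\ge\lambda$, and obtains the upper bound by evaluating $\Phi_n$ at a fixed test point $\mu_0$ where $\Phi(\mu_0)$ is close to $c^*$ and invoking pointwise convergence, merely packaged as a contradiction after normalizing $c^*=0$. Your direct $\liminf$/$\limsup$ formulation is the same mechanism in cleaner form, and your key observation—that in this decreasing case neither bound requires locating the minimizers $\mu^*_n$—is exactly what the paper's proof exploits.
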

\begin{proof}
Without loss of generality, we can assume $c^*=0$ (otherwise, $\lambda(\mu)$ changes to $e^{-c^*\mu}\lambda(\mu)$ and $\lambda_n(\mu)$ changes to $e^{-c^*\mu}\lambda_n(\mu)$ for any $n\geq 1$). We deduce that $\Phi_n (\mu) \geq \Phi_{n+1} (\mu) \geq  \Phi (\mu)$ and $c^*_n \geq c^*_{n+1} \geq c^*$ for any $n \geq 1$ because of $\lambda_n (\mu) \geq \lambda_{n+1} (\mu)\geq \lambda (\mu)$. Therefore, $\cn \rightarrow \overline{c}$ for some $\overline{c}\geq 0$ as $n \rightarrow +\infty$. It is sufficient to prove $\overline{c}=0$.  Suppose $\overline{c}>0$, and let $\epsilon_0=\frac{\overline{c}}{3}$. Since $\Phi(+\infty)=0$ and $\lim\limits_{\mu\rightarrow 0^+}\Phi(\mu)=+\infty$, it is easily seen that there exists $\mu_0>0$ such that $\Phi(\mu_0)=\epsilon_0$. Hence, there exists a sufficiently large integer $N > 0$ such that $\Phi_n(\mu_0)\leq \epsilon_0+\Phi(\mu_0)\leq 2 \epsilon_0,~ \forall n \geq N$, which contradicts with $c^*_n=\inf\limits_{\mu>0}\Phi_n(\mu)\geq \overline{c}=3\epsilon_0$. This completes the proof.
\end{proof}

\begin{prop}\label{prop:conti:inf=:in}
Assume that $\lambda_n (\mu) \leq \lambda_{n+1} (\mu) \leq \lambda (\mu)$ for all $n\geq 1$ and $\mu \in [0,+\infty)$ and $\lambda_n'(\mu)$ converges to $\lambda'(\mu)$ as $n \rightarrow +\infty$ for all $\mu\in [0,+\infty)$. If $\inf\limits_{\mu>0} \Phi(\mu)=\Phi(+\infty)$, then $\lim\limits_{n\rightarrow +\infty} \cn=c^*$.
\end{prop}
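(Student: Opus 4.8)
The plan is to follow the normalization used in the proof of Proposition \ref{prop:conti:inf=:de} and then argue by contradiction; the genuinely new feature is that the hypothesis $\lambda_n'\to\lambda'$ is exactly what lets me control $c_n^*$ when the minimizers of $\Phi_n$ escape to $+\infty$. First I would reduce to $c^*=0$ by replacing $\lambda(\mu)$ with $e^{-c^*\mu}\lambda(\mu)$ and each $\lambda_n(\mu)$ with $e^{-c^*\mu}\lambda_n(\mu)$; this preserves {\rm (L1)--(L3)}, the ordering $\lambda_n\le\lambda_{n+1}\le\lambda$, the pointwise convergence of both $\lambda_n$ and $\lambda_n'$, and the equality $\inf\Phi=\Phi(+\infty)$, while making $c^*=\Phi(+\infty)=0$. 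Because the infimum of $\Phi$ is attained only at $+\infty$, Lemma \ref{lem:basic_phi_psi}$(ii)(iii)$ forces $\Phi$ to be nonincreasing on all of $(0,+\infty)$ — an interior sign change of $\Phi'$ would place the infimum at a finite point strictly below $\Phi(+\infty)$ — so $\Phi(\mu)\ge 0$ with $\Phi(\mu)\downarrow 0$, and $\Psi(\mu)\le\Psi(+\infty)=\Phi(+\infty)=0$ by Lemma \ref{lem:basic_phi_psi}$(iv)$.

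The ordering $\lambda_n\le\lambda_{n+1}\le\lambda$ gives $\Phi_n\le\Phi_{n+1}\le\Phi$, hence $c_n^*\le c_{n+1}^*\le c^*=0$, so $c_n^*\uparrow\bar c$ for some $\bar c\le 0$ and it remains only to exclude $\bar c<0$. Assuming $\bar c<0$, set $\epsilon_0=-\bar c/2>0$ and, for each large $n$, use Proposition \ref{prop:Phi_Psi} to pick $\mu_n^*\in(0,+\infty]$ with $c_n^*=\Phi_n(\mu_n^*)=\Psi_n(\mu_n^*)$. Using $\Psi(+\infty)=0$ I would fix $M>0$ with $\Phi>0$ on $(0,M]$ and $\Psi(M)>-\epsilon_0$ (such $M$ exists because $\Psi$ already vanishes at any point where $\Phi$ first reaches $0$). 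Here the derivative hypothesis enters decisively: $\Psi_n(M)=\lambda_n'(M)/\lambda_n(M)\to\lambda'(M)/\lambda(M)=\Psi(M)>-\epsilon_0$, so $\Psi_n(M)>-\epsilon_0$ for large $n$. If $\mu_n^*\ge M$ (including $\mu_n^*=+\infty$), monotonicity of $\Psi_n$ from Lemma \ref{lem:basic_phi_psi}$(iv)$ gives $c_n^*=\Psi_n(\mu_n^*)\ge\Psi_n(M)>-\epsilon_0>\bar c\ge c_n^*$, a contradiction. Thus $\mu_n^*<M$, and in particular finite, for all large $n$.

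It then suffices to rule out the two remaining locations of $\mu_n^*$ in $(0,M)$. On any compact $[\eta,M]\subset(0,+\infty)$, Lemma \ref{lem:Psi_Phi:con} gives $\Phi_n\to\Phi$ uniformly, while $\Phi\ge\min_{[\eta,M]}\Phi=:\delta>0$; hence $\Phi_n\ge\delta/2$ there for large $n$, and $\mu_n^*\in[\eta,M]$ would force $c_n^*=\Phi_n(\mu_n^*)\ge\delta/2>0>\bar c$, impossible. Since $\eta>0$ is arbitrary, this forces $\mu_n^*\to 0$. But then $\lambda_n(\mu_n^*)=e^{c_n^*\mu_n^*}\to 1$, because $c_n^*\to\bar c$ is finite and $\mu_n^*\to 0$, whereas Dini's theorem applied to the monotone convergence $\lambda_n\uparrow\lambda$ on $[0,M]$ yields uniform convergence and hence $\lambda_n(\mu_n^*)\to\lambda(0)>1$ by {\rm (L2)} and continuity. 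This final contradiction excludes $\bar c<0$, so $\lim_n c_n^*=0=c^*$.

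I expect the real difficulty to be exactly the regime of the second paragraph, where the minimizers $\mu_n^*$ may run off to $+\infty$. In the decreasing case (Proposition \ref{prop:conti:inf=:de}) one only needs an upper bound on $\inf\Phi_n$, which a single pointwise evaluation $\Phi_n(\mu_0)\to\Phi(\mu_0)$ supplies; here one instead needs a lower bound on $\inf\Phi_n$ while having no a priori control on where that infimum is located, so pointwise convergence of $\Phi_n$ alone is useless. The resolution is to trade $\Phi_n$ for $\Psi_n$ via Proposition \ref{prop:Phi_Psi}: $\Psi_n$ is monotone and, thanks precisely to the extra assumption $\lambda_n'\to\lambda'$, converges pointwise to $\Psi$, so the value $c_n^*=\Psi_n(\mu_n^*)$ can be bounded below uniformly as soon as $\mu_n^*$ is large. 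Everything else is a routine compactness-and-Dini argument.
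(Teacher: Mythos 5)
Your proof is correct, and it reaches the contradiction by a route that is structurally inverted relative to the paper's. Both arguments rest on the same three tools: the monotone limit $c_n^*\uparrow \overline{c}\le c^*$, the identity $c_n^*=\Phi_n(\mu_n^*)=\Psi_n(\mu_n^*)$ from Proposition~\ref{prop:Phi_Psi} combined with the monotonicity of $\Psi_n$ (Lemma~\ref{lem:basic_phi_psi}(iv)) and the hypothesis $\lambda_n'\to\lambda'$, and the Dini-type uniform convergence of Lemma~\ref{lem:Psi_Phi:con}. The paper first proves, via Lemma~\ref{lem:Psi_Phi:con}, that the minimizers escape, $\mu_n^*\to+\infty$, and then places the contradiction at $\mu=+\infty$: for every fixed $\mu$ one has $\Psi(\mu)=\lim_n\Psi_n(\mu)\le\lim_n\Psi_n(\mu_n^*)=\overline{c}$, hence $c^*=\Psi(+\infty)\le\overline{c}<c^*$. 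You argue in the opposite direction: derivative convergence at the single point $M$ plus $\Psi_n$-monotonicity forbids $\mu_n^*\ge M$; Lemma~\ref{lem:Psi_Phi:con} forbids $\mu_n^*\in[\eta,M]$; hence $\mu_n^*\to 0$, and the contradiction sits at the origin, $\lambda_n(\mu_n^*)=e^{c_n^*\mu_n^*}\to 1$ against $\lambda_n(\mu_n^*)\to\lambda(0)>1$. As for what each approach buys: your version pays for this with the normalization $c^*=0$ and the structural step showing $\Phi$ is nonincreasing and that a point $M$ with $\Phi>0$ on $(0,M]$ and $\Psi(M)>-\epsilon_0$ exists --- your justification (that $\Psi$ vanishes at the first zero of $\Phi$, because $h=\ln\lambda$ is nonnegative, nonincreasing and $C^1$) is sound, but it is the most delicate part of your write-up and the paper avoids it entirely, making its proof shorter. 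In exchange, you invoke $\lambda_n'\to\lambda'$ at only one point rather than at every $\mu$, and your argument explicitly covers the degenerate possibility of minimizers tending to $0$, a case the paper's Claim glosses over: its appeal to Lemma~\ref{lem:Psi_Phi:con} tacitly assumes the limit $\overline{\mu}$ of a bounded subsequence is positive, and the case $\overline{\mu}=0$ needs a separate remark such as $\Phi_{n_k}(\mu^*_{n_k})\ge\Phi_1(\mu^*_{n_k})\to+\infty$, which contradicts $\Phi_{n_k}(\mu^*_{n_k})\to\overline{c}$.
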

\begin{proof}
Taking $\mu^*\in (0,+\infty]$ and $\mu^*_n\in (0,+\infty]$ such that $\Phi(\mu^*)=\inf\limits_{\mu>0}\Phi(\mu)$ and $\Phi_n(\mu^*_n)=\inf\limits_{\mu>0}\Phi_n(\mu),~ \forall n \geq 1$, we have $\Phi(\mu^*_n)=\Psi(\mu^*_n)$ due to Proposition $\ref{prop:Phi_Psi}$.
It then follows from $\lambda_n (\mu) \leq \lambda_{n+1} (\mu) \leq  \lambda(\mu)$ that $c^*_n \leq c^*_{n+1} \leq c^*,~ \forall n \geq 1$. Therefore, $\cn \rightarrow \overline{c}$ for some $\overline{c}\leq c^*$ as $n \rightarrow +\infty$.

Next, we prove $c^*=\overline{c}$. Supposing on the contrary that $\overline{c}<c^*$, we have the following claim.
{\\ \it Claim:} $\mu^*_n \rightarrow +\infty$ as $n \rightarrow +\infty$.

Indeed, if the claim is not right, then there exists a subsequence $n_k \geq 1$ with $\lim\limits_{k \rightarrow +\infty} n_k=+\infty$ such that $\omu=\lim\limits_{k\rightarrow +\infty} \mu^*_{n_k}<+\infty$. Lemma \ref{lem:Psi_Phi:con} implies that $\lim\limits_{k \rightarrow +\infty} \Phi_{n_k}(\mu^*_{n_k})=\Phi(\omu)$. Therefore, we have
$$
\overline{c}=\Phi(\omu)\geq \inf_{\mu >0}\Phi(\mu)=c^*>\overline{c}.
$$
This contradiction finishes the proof of the claim.

We now fix any $\mu>0$. There is a large enough number $N= N(\mu)>0$ such that $\mu^*_n>\mu,~ \forall \,n \geq N$ by the above claim. Lemma \ref{lem:basic_phi_psi}$(iv)$ shows that $\Psi_n(\mu) \leq \Psi_n(\mu^*_n),~ \forall\, n\geq N$, and hence,
$$\Psi(\mu)=\lim_{n\rightarrow +\infty} \Psi_n (\mu) \leq \lim_{n\rightarrow +\infty} \Psi_n (\mu^*_n)=\lim_{n\rightarrow +\infty} c_n^*=\overline{c}.$$ We conclude that $c^*= \Psi(+\infty)\leq \overline{c}$, which is a contradiction.
\end{proof}

The we obtain the following consequence.
\begin{thm}\label{thm:conti}
	Assume that $\lambda_n (\mu) \geq \lambda_{n+1} (\mu)\geq \lambda (\mu)$ or $\lambda_n (\mu) \leq \lambda_{n+1} (\mu) \leq \lambda (\mu)$ for all $n\geq 1$ and $\mu \in [0,+\infty)$ and $\lambda_n'(\mu)$ converges to $\lambda'(\mu)$ as $n \rightarrow +\infty$ for all $\mu\in [0,+\infty)$.
	Then $\lim\limits_{n\rightarrow +\infty} c_n^*=c^*$.
\end{thm}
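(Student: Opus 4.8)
The plan is to recognize that Theorem \ref{thm:conti} is nothing more than the amalgamation of the three Propositions \ref{prop:conti:inf<}, \ref{prop:conti:inf=:de} and \ref{prop:conti:inf=:in}, so the whole argument reduces to checking that their hypotheses, taken together, exhaust every configuration allowed in the statement. The organizing dichotomy is the position of $c^*=\inf_{\mu>0}\Phi(\mu)$ relative to $\Phi(+\infty)$. First I would record the elementary fact that $c^*\le\Phi(+\infty)$ always holds: since $c^*$ is an infimum over all $\mu>0$, it does not exceed $\Phi(\mu_k)$ along any sequence $\mu_k\to+\infty$, and letting $k\to+\infty$ gives $c^*\le\lim_{\mu\to+\infty}\Phi(\mu)=\Phi(+\infty)$. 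Hence exactly one of the two mutually exclusive situations occurs: either $c^*<\Phi(+\infty)$ or $c^*=\Phi(+\infty)$.

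In the strict case $c^*<\Phi(+\infty)$, Proposition \ref{prop:conti:inf<} applies verbatim, because its only standing hypotheses are the monotone convergence (from above or from below, both of which are permitted in the theorem) together with $\inf_{\mu>0}\Phi(\mu)<\Phi(+\infty)$; it delivers $\lim_{n\to+\infty}c_n^*=c^*$ immediately. I would note in passing that the extra assumption $\lambda_n'(\mu)\to\lambda'(\mu)$ is not needed in this case.

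In the equality case $c^*=\Phi(+\infty)$, I would split according to the direction of the monotone convergence. If $\lambda_n(\mu)\ge\lambda_{n+1}(\mu)\ge\lambda(\mu)$ for all $n$ and $\mu$, Proposition \ref{prop:conti:inf=:de} gives the conclusion. If instead $\lambda_n(\mu)\le\lambda_{n+1}(\mu)\le\lambda(\mu)$, I invoke Proposition \ref{prop:conti:inf=:in}; this is the one sub-case in which the derivative convergence $\lambda_n'\to\lambda'$ is genuinely used, since it enters that proposition through $\Psi_n(\mu)\to\Psi(\mu)$ when passing to the limit along $\mu_n^*\to+\infty$. Because the pairing ``$<$ versus $=$'' with ``from above versus from below'' leaves no gap, one of the three propositions always applies, and the proof is complete.

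I do not expect any substantive analytic obstacle here: all the real work—the uniform convergence on bounded subsets via Dini (Lemma \ref{lem:Psi_Phi:con}), the confinement of the minimizers $\mu_n^*$ to a compact interval, and the limit argument using $\Psi$ in the increasing degenerate case—has already been carried out in the three propositions. The only point requiring genuine care is the bookkeeping that the three hypothesis sets really cover all admissible data, and in particular the observation that the derivative hypothesis is exactly what is needed to repair the increasing-convergence equality case (it is superfluous in the other two). The hard part, such as it is, is therefore conceptual rather than computational: making sure the case split is exhaustive and that each branch matches the precise assumptions of the proposition being cited.
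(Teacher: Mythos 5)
Your proposal is correct and is exactly the paper's intended argument: the paper states Theorem \ref{thm:conti} as an immediate consequence of Propositions \ref{prop:conti:inf<}, \ref{prop:conti:inf=:de} and \ref{prop:conti:inf=:in}, and your case split ($c^*<\Phi(+\infty)$ versus $c^*=\Phi(+\infty)$, the latter subdivided by the direction of monotone convergence) together with the observation $c^*\leq\Phi(+\infty)$ is precisely the bookkeeping that makes this deduction rigorous. Your remark that the derivative-convergence hypothesis is only needed in the increasing equality case is also accurate.
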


\subsection{The discussion about a limiting case}
Throughout this subsection, choosing $s_0>0$, we let $\Lambda(s,\mu)$ be a function in $C^2( [0,s_0)\times [0,+\infty))$ with the following properties.
\begin{enumerate}[(K1)]
\item $\Lambda(0,0)=1$.
\item $\Lambda_{s}(0,0)>0$.
\item $\ln \Lambda(s,\mu)$ is convex with respect to $\mu\in [0,+\infty)$  for any $s\in [0,s_0)$.
\item $(\ln \Lambda)_{\mu \mu}(0,0)>0$.
\end{enumerate}

For any $\mu \in (0,+\infty)$, we write
 $$
\tPhi(s,\mu):=\frac{\ln \Lambda(s,\mu)}{\mu},
$$
and  for any $\mu \in [0,+\infty)$, we set
$$
\tPsi(s,\mu):=\frac{\Lambda_{\mu}(s,\mu)}{\Lambda(s,\mu)},\quad
H(s,\mu):=\ln \Lambda(s,\mu), \quad
G(s,\mu):=H_{\mu}(s,\mu)\mu- H(s,\mu).
$$

Define
$$c^*(s):=\inf_{\mu>0}\tPhi(s,\mu),~ \forall s \geq 0$$

According to (K1) and (K2), we have $\Lambda(s,0)>1, ~\forall s\geq 0$. Then the sign of $c(s)$ can be discussed by above subsections for all $s>0$.
The following lemma is to show an interesting problem: when $s$ goes to $0$, where will $c^*(s)$ go?
\begin{thm}\label{thm2.6}
There exist a unique $\mu^*(s)$ with $0<\mu^*(s)<+\infty$ and some $p \in (0,s_0)$ such that $\tPhi(s,\mu^*(s))=c^*(s)$ for any $0<s<p$. Moreover, $\mu^*(s)\rightarrow 0$ as $s \rightarrow 0^+$ and $\lim\limits_{s\rightarrow 0^+} c^*(s)=\tPsi(0,0)$.
\end{thm}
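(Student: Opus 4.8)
The plan is to reduce the whole statement to a sign analysis of the auxiliary function $G(s,\mu)=H_\mu(s,\mu)\mu-H(s,\mu)$, using the identity $\tPhi_\mu(s,\mu)=G(s,\mu)/\mu^2$, so that interior critical points of $\tPhi(s,\cdot)$ are exactly the positive zeros of $G(s,\cdot)$. Three structural facts drive the argument: convexity (K3) gives $G_\mu(s,\mu)=H_{\mu\mu}(s,\mu)\,\mu\ge 0$, so $G(s,\cdot)$ is non-decreasing on $[0,+\infty)$; (K4) and the continuity of $H_{\mu\mu}$ produce a box $[0,\delta)\times[0,\eta)$ on which $H_{\mu\mu}>0$, hence $G(s,\cdot)$ is \emph{strictly} increasing there; and (K1)--(K2) force $\Lambda(s,0)>1$, so $G(s,0)=-H(s,0)<0$.

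First I would establish the existence and uniqueness of $\mu^*(s)$. Integrating $G_\mu(0,t)=H_{\mu\mu}(0,t)\,t>0$ from the base value $G(0,0)=-H(0,0)=0$ shows $G(0,\mu)>0$ for all $\mu\in(0,\eta)$; in particular $G(0,\eta/2)>0$, so by continuity there is $p\in(0,\delta)$ with $G(s,\eta/2)>0$ for every $s\in[0,p)$. For $s\in(0,p)$ we then have $G(s,0)<0<G(s,\eta/2)$ with $G(s,\cdot)$ strictly increasing on $[0,\eta)$, and the intermediate value theorem yields a unique zero $\mu^*(s)\in(0,\eta/2)$. Since $G(s,\cdot)$ is non-decreasing on the whole half-line, $G$ stays positive past $\eta/2$, so this is the only positive zero; together with $\tPhi(s,\mu)\to+\infty$ as $\mu\downarrow 0$ (Lemma \ref{lem:basic_phi_psi} applied to $\lambda(\cdot)=\Lambda(s,\cdot)$, legitimate because $\Lambda(s,0)>1$) and the sign of $\tPhi_\mu=G/\mu^2$, this forces $\tPhi(s,\cdot)$ to be decreasing then increasing, so $\mu^*(s)$ is its global minimizer and $\tPhi(s,\mu^*(s))=c^*(s)$.

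Next I would show $\mu^*(s)\to 0$ by compactness. The minimizers lie in the bounded interval $(0,\eta/2)$, so if they failed to converge to $0$ some sequence $s_k\to0^+$ would give $\mu^*(s_k)\to\mu_0\in(0,\eta/2]$; continuity of $G$ would then force $G(0,\mu_0)=0$, contradicting $G(0,\cdot)>0$ on $(0,\eta)$. With $\mu^*(s)\to0$ in hand, the final limit is immediate: the critical-point relation $H(s,\mu^*(s))=H_\mu(s,\mu^*(s))\,\mu^*(s)$ gives $c^*(s)=\tPhi(s,\mu^*(s))=\tPsi(s,\mu^*(s))$, the present analogue of Proposition \ref{prop:Phi_Psi}; since $(s,\mu^*(s))\to(0,0)$ and $\tPsi=\Lambda_\mu/\Lambda$ is continuous (as $\Lambda(0,0)=1\ne 0$), we obtain $\lim_{s\to0^+}c^*(s)=\tPsi(0,0)$.

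The step I expect to be the main obstacle is the degeneracy at the base point. Because $G_\mu(0,0)=H_{\mu\mu}(0,0)\cdot 0=0$, the implicit function theorem cannot be used to solve $G(s,\mu)=0$ near $(0,0)$; a Taylor expansion in fact predicts the square-root scaling $\mu^*(s)\sim\sqrt{2\Lambda_s(0,0)/(\ln\Lambda)_{\mu\mu}(0,0)}\,\sqrt{s}$. Consequently the existence of a \emph{finite} minimizer and, more delicately, the guarantee that for small $s$ the infimum is not pushed out to $\mu=+\infty$ (the very phenomenon the paper warns about when $\tPhi(\cdot,+\infty)<+\infty$) must be extracted from the global monotonicity of $G$ in $\mu$ combined with the localized strict convexity from (K4), rather than from any routine implicit-function argument. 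Once the finite minimizer is secured and shown to collapse to the origin, the evaluation $c^*(s)=\tPsi(s,\mu^*(s))$ reduces the concluding limit to a continuity statement.
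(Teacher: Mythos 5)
Your proof is correct, and it takes a genuinely different route from the paper's. The paper gets around the degeneracy $G_\mu(0,0)=0$ (which it flags in a remark, and which you also identify as the crux) by applying the implicit function theorem in the \emph{other} variable: since $G_s(0,0)=-\Lambda_s(0,0)<0$ by (K1)--(K2), it solves $G(s,\mu)=0$ for $s=s(\mu)$ with $s(0)=0$, uses (K4) to get $s'(\mu)=-G_\mu(s,\mu)/G_s(s,\mu)>0$ for small $\mu>0$, and then defines $\mu^*(s)$ as the continuous inverse of the strictly increasing map $\mu\mapsto s(\mu)$; continuity of this inverse immediately yields $\mu^*(s)\to0$ and hence $c^*(s)=\tPsi(s,\mu^*(s))\to\tPsi(0,0)$. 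You instead avoid the implicit function theorem entirely: (K3) gives $G_\mu(s,\mu)=H_{\mu\mu}(s,\mu)\,\mu\geq0$ globally, (K4) gives strict monotonicity of $G(s,\cdot)$ on a fixed box, the sign change $G(s,0)<0<G(s,\eta/2)$ sets up the intermediate value theorem, and a compactness/contradiction argument substitutes for the continuity of $\mu^*(s)$ in proving $\mu^*(s)\to0$. Each approach has its advantages: the paper's is shorter and delivers regularity of $s\mapsto\mu^*(s)$ for free, while yours is more elementary and makes explicit both the uniqueness of the minimizer and the confinement of all minimizers to the fixed compact interval $(0,\eta/2)$ --- which is precisely what rules out the infimum escaping to $\mu=+\infty$, a step the paper leaves implicit (there it follows from Lemma \ref{lem:basic_phi_psi}(iii) applied to $\Lambda(s,\cdot)$). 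One small caution: (K1)--(K2) give $\Lambda(s,0)>1$, hence $G(s,0)<0$, only for small $s$ (K2 is a condition at $s=0$ only), so your $p$ must also be taken small enough for this to hold on $(0,p)$; this is harmless, and the paper makes the same tacit restriction. Both proofs finish identically through the critical-point identity $c^*(s)=\tPhi(s,\mu^*(s))=\tPsi(s,\mu^*(s))$ and continuity of $\tPsi$ at $(0,0)$.
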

\begin{proof}
 By the implicit function theorem, there exist $\mu_1>0$ and continuously differential function $s(\mu)$ on $[0,\mu_1]$ with $s(0)=0$ such that $G(s(\mu),\mu)=0$ due to $G_{s}(0,0)=\Big(\mu H_{\mu s}(s,\mu)-\frac{\Lambda_s(s,\mu)}{\Lambda(s,\mu)}\Big)\Big\vert_{(0,0)}<0$. By virtue of $H_{\mu \mu}(0,0)>0$, there are $\mu_2>0$ and $s_2\in (0,s_0)$ such that $G_{\mu}(s,\mu)=\mu H_{\mu \mu}(s,\mu) >0$ on $ [0,s_2] \times (0,\mu_2]$.  We derive $s'(\mu)=-\frac{G_{u}(s,\mu)}{G(s,\mu)}>0, ~ \forall \mu\in (0,\mu_3]$, where $\mu_3=\min\{\mu_1,\mu_2\}$. Hence, $s(\mu)$ is strictly increasing and continuous on $[0,\mu_3]$. Letting $s_3=s(\mu_3)$, we obtain that $s(\mu)$ admits a continuous inverse function $\mu^*(s)$ on $[0,s_3]$ and $G(s,\mu^*(s))=0,~ \forall s \in [0,s_3]$. It then follows that $c^*(s)=\tPhi(s,\mu^*(s))=\tPsi(s,\mu^*(s)), ~ \forall s\in (0,s_3]$. Therefore, we conclude that $\lim\limits_{s\rightarrow 0^+} \mu^* (s)=0$ and $\lim\limits_{s\rightarrow 0^+} c^*(s)=\lim\limits_{s\rightarrow 0^+} \tPsi(s,\mu^*(s))=\tPsi(0,0)$. This completes the proof.
\end{proof}
\begin{remark}
In the proof of Theorem \ref{thm2.6}, the implicit function theorem cannot be applied for $\mu$ at $(0,0)$ due to $G_{\mu}(0,0)=0$.
\end{remark}

\section{An application to cellular neural networks}
In this section, we investigate propagation phenomena and some properties of spreading speeds for cellular neural networks
\begin{align}\label{3.1}
\frac{dx_i(t)}{dt} =-x_i(t)+\alpha f(x_{i-1}(t))+af(x_{i}(t))+\beta f(x_{i+1}(t)),\,\,i\in\mathbb{Z},
\end{align}
where the output function $f(u)=\frac{1}{2}(|u+1|-|u-1|)$ and the parameters $\alpha,a,\beta$ are nonnegative.


Now we give the following assumptions.

(H) The nonnegative parameters $\alpha$, $a$ and $\beta$ satisfy $\alpha+\beta>0$ and $\alpha+a+\beta>1$.

According to (H), \eqref{3.1} has three equilibria 0 and $\pm K$, where $K=\alpha+a+\beta.$


\subsection{Existence of spreading speeds}

 Let $Q_t$ be the solution map at time $t\geq0$ of system \eqref{3.1}, that is,
$$
Q_t(x^0)=x(t,x^0),\quad \forall\,\,x^0=\{x^0_{i}\}_{i\in\mathbb{Z}}\in\mathcal{X}_{K},
$$
where $\mathcal{X}_K=\{\varphi=\{\varphi_i\}_{i\in\mathbb{Z}}\,|\,\varphi_i\in [0,K],\,i\in\mathbb{Z}\}$.
We can easily check that  $Q:=Q_1$ satisfy all hypotheses (A1)--(A6) in \cite{liang2010spreading}. Thus,  there exist $c^*_+$ and $c^*_{-}$ are the rightward and leftward spreading speeds of $Q$, respectively.

Firstly, we estimate the rightward spreading speeds. Therefore, we consider the linearized equation of \eqref{3.1} at the zero solution, i.e.,
\begin{align}\label{3.2}
\frac{dx_i(t)}{dt} =-x_i(t)+\alpha x_{i-1}(t)+ax_{i}(t)+\beta x_{i+1}(t),\,\,i\in\mathbb{Z}.
\end{align}
Let $\{M_t\}_{t\geq0}$ be the solution semiflow associated with \eqref{3.2}. Thus, for each $t>0$, the map $M_t$ satisfies the assumptions (C1)--(C5) in \cite{liang2010spreading}. Notice that $f'(0)u\geq f(u)$ for $u\in[0,K]$, where $f'(0)=1$. By the comparison theorem, we have $Q_t(x^0)\leq M_t(x^0),\,\forall\, x^0\in\mathcal{X}_K,\,\, t\geq0.$ On the other hand, for any $\epsilon>0$, there exists $\delta>0$ such that, for $x^0\in \mathcal{X}_K$ with $x^0<\delta$, we can obtain $Q_t(x^0)\geq M^\epsilon_t(x^0)$ for all $t\in [0,1]$, where $ M^\epsilon_t$ is the solution semiflow of
\begin{align}\label{3.02}
\frac{dx_i(t)}{dt} =-x_i(t)+(1-\epsilon)\alpha x_{i-1}(t)+(1-\epsilon)ax_{i}(t)+(1-\epsilon)\beta x_{i+1}(t),\,\,i\in\mathbb{Z}.
\end{align}

Taking $x_i(t)=e^{-\mu i}v(t)$ is a solution \eqref{3.2}, $v(t)$ satisfies the following differential equation:
\begin{align}\label{3.3}
\frac{dv(t)}{dt} =(a-1+\alpha e^{\mu}+\beta e^{-\mu})v(t).
\end{align}

Letting \begin{align*}
B_\mu^t(v_0):=M_t[v_0 e^{-\mu i}](0)=v(t,v_0),\,\forall\,\, v(0)=v_0\in [0,\infty),
\end{align*}
it follows that $B_\mu^t$ is the solution map at time $t$ of equation \eqref{3.3} and
$$B_\mu^t(v_0)=e^{(a-1+\alpha e^{\mu}+\beta e^{-\mu})t}v_0,\,\forall\,\, v_0\in [0,\infty).$$
Thus, for any $\mu\geq0$, $B_\mu:=B_\mu^1$ is a compact and strongly positive linear operator on $[0,\infty)$, i.e., (C6) in \cite{liang2010spreading} holds. It is obvious to see that
$$
\lambda(\mu)=e^{a-1+\alpha e^{\mu}+\beta e^{-\mu}}
$$
is the principal eigenvalue of $B_\mu$ for any $\mu\geq0$ and $\lambda(0)=e^{a-1+\alpha+\beta}>1$.

Let
$$
\Phi(\mu)=\frac{\ln \lambda(\mu)}{\mu}=\frac{h(\mu)}{\mu},~\mu\neq0\quad\mbox{and}\quad
\Psi(\mu)=\frac{\lambda'(\mu)}{\lambda(\mu)},~\mu\in(-\infty,+\infty),
$$
where $h(\mu)=a-1+\alpha e^{\mu}+\beta e^{-\mu}$. It is obvious that Lemma \ref{lem:basic_phi_psi} holds. We denote
$$
\Phi(+\infty):=\lim\limits_{\mu\rightarrow +\infty} \Phi(\mu)\qquad
\Phi^+(\mu)=\Phi(\mu),\qquad
\Phi^-(\mu)=-\Phi(-\mu),
$$
$$
\Psi(+\infty):=\lim\limits_{\mu\rightarrow +\infty} \Psi(\mu)\qquad
\Psi^+(\mu)=\Psi(\mu),\qquad
\Psi^-(\mu)=-\Psi(-\mu).
$$
According to Proposition 3.9 and Theorem 3.10 in \cite{liang2007asymptotic} and Lemma 4.6 in \cite{ding2015principal} (including that $\inf_{\mu>0}\Phi^\pm(\mu)=\Phi^\pm(+\infty)$ ), we have
\begin{align}\label{3.4}
c^*_+=\inf_{\mu>0}\Phi^+(\mu)=\inf_{\mu>0}\frac{\ln \lambda(\mu)}{\mu}=\inf_{\mu>0}\frac{a-1+\alpha e^{\mu}+\beta e^{-\mu}}{\mu},
\end{align}

Similarly, it follows that the left spreading speed
\begin{align}\label{3.5}
c^*_-=\inf_{\mu>0}\Phi^-(\mu)=\inf_{\mu>0}\frac{\ln \lambda(-\mu)}{\mu}=\inf_{\mu>0}\frac{a-1+\alpha e^{-\mu}+\beta e^{\mu}}{\mu},
\end{align}

Now we want to prove $c_{+}^*+c_{-}^*>0$.
\begin{prop}\label{prop:both>0}
	Assume that {\rm (H)} hold. Then $c_{+}^*+c_{-}^*>0$.
\end{prop}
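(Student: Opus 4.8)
My plan is to reduce the whole inequality to a single convexity estimate for $h(\mu)=a-1+\alpha e^{\mu}+\beta e^{-\mu}$, now regarded as a function on all of $\R$. First I would record the reformulation that is already built into the definitions $\Phi^{+}(\mu)=\Phi(\mu)$ and $\Phi^{-}(\mu)=-\Phi(-\mu)$. Since $c^*_+=\inf_{\mu>0}\Phi(\mu)$ and $c^*_-=\inf_{\mu>0}\Phi^{-}(\mu)=-\sup_{\nu<0}\Phi(\nu)$, this gives
$$c^*_++c^*_-=\inf_{\mu>0}\Phi(\mu)-\sup_{\nu<0}\Phi(\nu).$$
Everything is therefore governed by the single convex function $h$ (the map $\mu\mapsto\alpha e^{\mu}+\beta e^{-\mu}$ is convex) together with the strict inequality $h(0)=\alpha+a+\beta-1=K-1>0$ furnished by (H).

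Next I would exploit the $\alpha\leftrightarrow\beta$ symmetry, which interchanges $\Phi^{+}$ and $\Phi^{-}$, hence $c^*_+$ and $c^*_-$, while leaving the sum unchanged; so I may assume $\beta>0$. Note that Lemma \ref{lem:basic_phi_psi} applies verbatim to $\Phi^{-}$ as well (replace $\lambda(\mu)$ by $\lambda(-\mu)$, which still satisfies {\rm (L1)--(L3)}). Then $\Phi^{-}(\mu)\to+\infty$ as $\mu\downarrow0$ by Lemma \ref{lem:basic_phi_psi}(i) and also as $\mu\to+\infty$ because $\beta e^{\mu}/\mu\to+\infty$. Consequently $\Phi^{-}$ attains its infimum at some finite $\mu_2\in(0,+\infty)$ (Proposition \ref{prop:Phi_Psi} gives a minimizer in $(0,+\infty]$, and $\mu_2=+\infty$ is excluded since the finite infimum cannot equal $\Phi^{-}(+\infty)=+\infty$). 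Writing $\nu_0=-\mu_2<0$, I then have $c^*_-=-\Phi(\nu_0)$ with $\nu_0$ finite.

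The core step is a chord inequality. For an arbitrary $\mu>0$ the origin lies strictly between $\nu_0<0$ and $\mu>0$; expressing $0$ as the corresponding convex combination of $\nu_0$ and $\mu$, applying convexity of $h$, and dividing through by $-\mu\nu_0>0$, I would rearrange $h(0)\le\frac{\mu\,h(\nu_0)-\nu_0\,h(\mu)}{\mu-\nu_0}$ into the form
$$\Phi(\mu)-\Phi(\nu_0)\ \ge\ h(0)\Big(\tfrac1\mu+\tfrac1{\mu_2}\Big),\qquad \mu>0.$$
Taking the infimum over $\mu>0$ collapses the left-hand side to exactly $c^*_+-\Phi(\nu_0)=c^*_++c^*_-$, while the right-hand side stays $\ge h(0)/\mu_2$ (its limit as $\mu\to+\infty$). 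Hence $c^*_++c^*_-\ge h(0)/\mu_2>0$, which is the claim.

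I expect the one delicate point to be the regime $\Phi(+\infty)<+\infty$ that motivates the paper, where $\inf_{\mu>0}\Phi(\mu)$ may be realized only in the limit $\mu\to+\infty$: there a crude pointwise bound on $\Phi(\mu)-\Phi(\nu)$ degenerates, since its guaranteed gap $h(0)/\mu$ vanishes as $\mu\to+\infty$. The device that defuses this obstacle is to freeze the finite left-minimizer $\nu_0$ first and take the infimum only in the right variable $\mu$: the surviving term $h(0)/\mu_2$ is independent of where the right-hand infimum is attained, so the argument never needs to know whether $c^*_+$ comes from a finite $\mu$ or from the limit $\mu\to+\infty$.
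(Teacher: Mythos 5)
Your proof is correct, but it takes a genuinely different route from the paper's. The paper's proof is two lines: apply Proposition \ref{prop:Phi_Psi} in each direction to get minimizers $\mu^*_\pm\in(0,+\infty]$ with $c^*_+=\Psi(\mu^*_+)$ and $c^*_-=-\Psi(-\mu^*_-)$, then invoke the strict monotonicity of $\Psi(\mu)=h'(\mu)=\alpha e^{\mu}-\beta e^{-\mu}$ on $\R$ to conclude $c^*_++c^*_-=\Psi(\mu^*_+)-\Psi(-\mu^*_-)>0$ since $\mu^*_+>0>-\mu^*_-$. That argument leans on the first-order identity $\Phi=\Psi$ at minimizers, which Proposition \ref{prop:Phi_Psi} (via Lemma \ref{lem:basic_phi_psi}(iv)) extends to $\mu^*=+\infty$, so it never needs to know whether the infima are attained at finite points; on the other hand it uses strict convexity of $h$ (available here because $\alpha+\beta>0$). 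Your argument avoids $\Psi$ and the first-order condition altogether: after the symmetry reduction to $\beta>0$ (which forces the left minimizer $\nu_0=-\mu_2$ to be finite, since $\Phi^-$ blows up at both ends of $(0,+\infty)$), you run a three-point chord inequality for the convex function $h$ through $\nu_0<0<\mu$, anchored at $h(0)=K-1>0$ from (H), and only then take the infimum in $\mu$. This is more elementary (plain convexity plus $h(0)>0$ suffices) and yields the quantitative bound $c^*_++c^*_-\geq h(0)/\mu_2>0$, which the paper's proof does not provide; the price is the WLOG step and the separate attainment argument for the left infimum. Your closing observation is also accurate: freezing the finite minimizer $\nu_0$ before taking $\inf_{\mu>0}$ is precisely what keeps the bound from degenerating in the regime $\Phi^+(+\infty)<+\infty$, where $c^*_+$ may only be realized as $\mu\to+\infty$; the paper handles that same regime instead through the extended identity $\Phi(+\infty)=\Psi(+\infty)$.
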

\begin{proof}
It follows from Lemma \ref{lem:basic_phi_psi} that there exists $\mu^*_+\in(0,+\infty]$ and $\mu^*_-\in(0,+\infty]$ such that $c^*_+=\Phi^+(\mu^*_+)$ and $c^*_-=\Phi^-(\mu^*_-)$. By Proposition \ref{prop:Phi_Psi} we have $\Phi^+(\mu^*_+)=\Psi^+(\mu^*_+)=\Psi(\mu^*_+)$ and $\Phi^-(\mu^*_-)=\Psi^-(\mu^*_-)=-\Psi(-\mu^*_-)$. Since $\Psi(\mu)$ is strictly increasing in $\R$, we conclude $c^*_++c^*_-=\Psi(\mu^*_+)-\Psi(-\mu^*_-)>0$.
\end{proof}
As a direct result of Theorems 3.4 in \cite{liang2010spreading} and Theorem 2.12 in \cite{ding2015principal}, we have the following conclusions.

\begin{thm}\label{thm3.1}Assume that {\rm (H)} holds. Let $x(t)$ be a solution of \eqref{3.1} with the initial condition $x^0\in \mathcal{X}_K$. Then $c^*_+$ and $c^*_{-}$  defined by \eqref{3.4} and \eqref{3.5} are the rightward and leftward
spreading speeds of $Q_1$, respectively, such that the following statements are valid:
\begin{itemize}
  \item [(i)]  For any $c > c^*_+$ and $c' > c^*_{-}$, if $x^0\in \mathcal{X}_K$ with $x^0_i  = 0$ for $i$ outside a bounded interval, then
$\lim\limits_{t\rightarrow\infty,i\geq ct} x_i (t) = 0$  and $\lim\limits_{t\rightarrow\infty,i\leq -c't} x_i (t) = 0$.

  \item [(ii)]  For any $c < c^*_+$ and $c' < c^*_{-}$, if $x^0\in \mathcal{X}_K\backslash\{0\}$, then $\lim\limits_{t\rightarrow\infty,-c't\leq i\leq ct} x_i (t) = K$.
\end{itemize}
\end{thm}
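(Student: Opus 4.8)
The plan is to obtain Theorem \ref{thm3.1} as the specialization of the abstract spreading-speed theory for monotone semiflows to the time-one map $Q_1$ of \eqref{3.1}; essentially all the work is in verifying the hypotheses of the cited abstract theorems and in pinning down the value of the speeds. Most of the groundwork is already in place: $Q_1$ is order-preserving on the interval $\mathcal{X}_K$ bounded by the two ordered equilibria $0$ and $K$, and it has been checked to satisfy hypotheses (A1)--(A6) of \cite{liang2010spreading}, so the rightward and leftward spreading speeds $c^*_+$ and $c^*_-$ exist and Theorem 3.4 of \cite{liang2010spreading} supplies the spreading dichotomy. I would stress here the role of Proposition \ref{prop:both>0}: the inequality $c^*_++c^*_->0$ is exactly the non-degeneracy needed for the ``spreading'' (rather than ``extinction'') conclusion, and it guarantees that for $c<c^*_+$ and $c'<c^*_-$ close to the speeds the index range $-c't\le i\le ct$ is non-empty, so that part (ii) is meaningful.

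Next I would identify $c^*_\pm$ with the explicit formulas \eqref{3.4}--\eqref{3.5} via linear determinacy. The convexity estimate $f'(0)u\ge f(u)$ on $[0,K]$ gives the comparison $Q_t(x^0)\le M_t(x^0)$, while for small initial data the reverse inequality $Q_t(x^0)\ge M^\epsilon_t(x^0)$ holds, with $M^\epsilon_t$ solving \eqref{3.02}. Since $B_\mu$ is a compact, strongly positive linear operator with principal eigenvalue $\lambda(\mu)=e^{a-1+\alpha e^\mu+\beta e^{-\mu}}$, its rightward and leftward linear speeds are $\inf_{\mu>0}\Phi^\pm(\mu)$. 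Comparison with $M_t$ from above yields $c^*_\pm\le \inf_{\mu>0}\Phi^\pm(\mu)$; comparison with $M^\epsilon_t$ from below, followed by letting $\epsilon\downarrow 0$ and invoking continuity of the linear speed in the coefficients, yields the matching lower bound. This is precisely the content of Proposition 3.9 and Theorem 3.10 of \cite{liang2007asymptotic}, and it establishes \eqref{3.4}--\eqref{3.5}.

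The one genuine obstacle is that the classical formula of \cite{liang2007asymptotic} presupposes that $\inf_{\mu>0}\Phi^\pm(\mu)$ is attained at a finite $\mu^*$ with $\Phi^\pm(+\infty)>\Phi^\pm(\mu^*)$, whereas under (H) the infimum can be attained only in the limit $\mu\to+\infty$. Concretely, when $\beta=0$ one computes $\Phi^+(+\infty)=0$, and the rightward infimum may coincide with $\Phi^+(+\infty)$ (and symmetrically for $\alpha=0$ on the left). To cover this borderline regime I would invoke Lemma 4.6 and Theorem 2.12 of \cite{ding2015principal}, which extend both the linear-determinacy formula $\inf_{\mu>0}\Phi^\pm(\mu)=\Phi^\pm(+\infty)$ and the spreading dichotomy to the case $\Phi^\pm(+\infty)=\Phi^\pm(\mu^*)$; this is exactly why that second reference is indispensable. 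With \eqref{3.4}--\eqref{3.5} now valid in every parameter case admitted by (H), and with the monotone-semiflow dichotomy furnished by Theorem 3.4 of \cite{liang2010spreading}, conclusions (i) and (ii) follow directly, completing the argument.
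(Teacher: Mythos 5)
Your proposal is correct and takes essentially the same route as the paper: verify hypotheses (A1)--(A6) of \cite{liang2010spreading} for $Q_1$, use the comparisons with $M_t$ and $M^\epsilon_t$ together with Proposition 3.9 and Theorem 3.10 of \cite{liang2007asymptotic} and Lemma 4.6 of \cite{ding2015principal} to identify the speeds with \eqref{3.4}--\eqref{3.5} (including the borderline case $\inf_{\mu>0}\Phi^\pm(\mu)=\Phi^\pm(+\infty)$), and conclude via Theorem 3.4 of \cite{liang2010spreading} and Theorem 2.12 of \cite{ding2015principal}. Your emphasis on Proposition \ref{prop:both>0} as the non-degeneracy hypothesis needed for part (ii) also matches the paper's arrangement exactly.
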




\subsection{The sign of spreading speeds}
In this subsection, we investigate the sign of spreading speeds for CNNs by using the results in Section 2.
\begin{prop}\label{prop3.3} Assume that {\rm (H)} hold. Then the following statements hold.

\begin{itemize}
\item [(i)] If $\alpha>\beta$, then $c^*_+>c^*_-$ and $c^*_+>0$.
  \item [(ii)] If $\alpha=\beta>0$, then $c^*_+=c^*_->0.$
  \item [(iii)] If $\alpha<\beta$, then $c^*_+<c^*_-$ and $c^*_->0$.
  \end{itemize}
\end{prop}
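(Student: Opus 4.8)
The plan is to reduce all three parts to a single pointwise comparison between the two functions whose infima define the speeds, combined with the positivity $c^*_++c^*_->0$ already established in Proposition \ref{prop:both>0}. From \eqref{3.4} and \eqref{3.5} the decisive identity is
$$
\Phi^+(\mu)-\Phi^-(\mu)=\frac{(\alpha-\beta)(e^{\mu}-e^{-\mu})}{\mu},\qquad \mu>0,
$$
whose sign on $(0,+\infty)$ is exactly that of $\alpha-\beta$, since $e^{\mu}-e^{-\mu}>0$ and $\mu>0$ there. Moreover $c^*_-$ is obtained from $c^*_+$ simply by interchanging $\alpha$ and $\beta$, so case $(iii)$ is the image of case $(i)$ under the swap $\alpha\leftrightarrow\beta$ (which exchanges $c^*_+$ and $c^*_-$); it therefore suffices to treat $(i)$ and the boundary case $(ii)$.

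For $(i)$, assume $\alpha>\beta\geq 0$, so in particular $\alpha>0$. First I would verify that the infimum defining $c^*_+$ is attained at a finite point: since $\alpha>0$, $\Phi^+(\mu)\to+\infty$ both as $\mu\downarrow 0$ (Lemma \ref{lem:basic_phi_psi}$(i)$) and as $\mu\to+\infty$ (because $\alpha e^{\mu}/\mu\to+\infty$), so by continuity there is $\mu^*_+\in(0,+\infty)$ with $c^*_+=\Phi^+(\mu^*_+)$. Evaluating the displayed identity at this finite, positive $\mu^*_+$ gives $\Phi^-(\mu^*_+)<\Phi^+(\mu^*_+)$, whence
$$
c^*_-=\inf_{\mu>0}\Phi^-(\mu)\leq \Phi^-(\mu^*_+)<\Phi^+(\mu^*_+)=c^*_+.
$$
Combining $c^*_+>c^*_-$ with $c^*_++c^*_->0$ (Proposition \ref{prop:both>0}) yields $2c^*_+>c^*_++c^*_->0$, hence $c^*_+>0$, which completes $(i)$.

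Case $(ii)$ is then immediate: when $\alpha=\beta>0$ the displayed difference vanishes identically, so $\Phi^+\equiv\Phi^-$ and $c^*_+=c^*_-$; Proposition \ref{prop:both>0} forces $2c^*_+=c^*_++c^*_->0$, i.e. $c^*_+=c^*_->0$. Case $(iii)$ follows from $(i)$ by the symmetry noted above. The one point I expect to require the most care is the strict inequality $c^*_+>c^*_-$: the pointwise bound $\Phi^+>\Phi^-$ alone yields only $c^*_+\geq c^*_-$ after passing to infima, and strictness is recovered precisely by evaluating at the minimizer $\mu^*_+$. This is why one must first confirm that $\mu^*_+$ is finite (guaranteed here by $\alpha>0$, equivalently by $\Phi^+(+\infty)=+\infty$); were the infimum only approached as $\mu\to+\infty$, the comparison would degenerate and this argument would need to be revisited.
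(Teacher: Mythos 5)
Your proof is correct, and its overall skeleton matches the paper's: the same explicit difference identity for $\Phi^+-\Phi^-$, the same use of Proposition \ref{prop:both>0} to upgrade $c^*_+>c^*_-$ to $c^*_+>0$ (and to get positivity in case (ii)), and the same $\alpha\leftrightarrow\beta$ symmetry to dispose of (iii). Where you genuinely diverge is in how the strict inequality $c^*_+>c^*_-$ is extracted from the pointwise comparison: you first prove that the infimum defining $c^*_+$ is attained at a finite $\mu^*_+$ (using $\alpha>0$, hence $\Phi^+(+\infty)=+\infty$) and then evaluate both functions there, whereas the paper bypasses any attainment argument via the elementary inequality for infima (both infima being finite),
$$
c^*_+-c^*_-=\inf_{\mu>0}\Phi^+(\mu)-\inf_{\mu>0}\Phi^-(\mu)\geq\inf_{\mu>0}\bigl[\Phi^+(\mu)-\Phi^-(\mu)\bigr]=(\alpha-\beta)\inf_{\mu>0}\frac{e^{\mu}-e^{-\mu}}{\mu}>0,
$$
the last step because $(e^{\mu}-e^{-\mu})/\mu\geq 2$ on $(0,+\infty)$. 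The paper's route is shorter, yields the quantitative bound $c^*_+-c^*_-\geq 2(\alpha-\beta)$, and---addressing exactly the caveat you raise at the end of your proposal---remains valid even if the infimum of $\Phi^+$ were only approached as $\mu\to+\infty$, since the gap function is bounded away from zero uniformly on $(0,+\infty)$ rather than controlled only at a single minimizer. Your attainment step is correct as written and is a reasonable generic fallback, but in this problem it is dispensable.
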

\begin{proof} It is easily checked that (L1)-(L3) hold.
If $\alpha>\beta$, then
\begin{align*}c^*_+-c^*_-=&\inf\limits_{\mu>0}\Phi^+(\mu)-\inf\limits_{\mu>0}\Phi^-(\mu)\\
\geq&\inf\limits_{\mu>0}[\Phi^+(\mu)-\Phi^-(\mu)]\\
=&(\alpha-\beta)\inf\limits_{\mu>0}\frac{(e^{\mu}-e^{-\mu})}{\mu}>0.
\end{align*}
Therefore $c^*_+>0$ from Proposition \ref{prop:both>0}.
By the similar way, we can prove the case where $\alpha<\beta$.
If $\alpha=\beta$, it is obvious that $\Phi^+(\mu)=\Phi^-(\mu)$, which implies that $c^*_+=c^*_-$.
 This completes the proof.
\end{proof}

\begin{prop}\label{prop:CNN:threhold:beta>alpha>0} Assume that {\rm (H)} holds. In addition, $\beta>\alpha>0$, then the following conclusions hold:
\begin{itemize}
  \item [(i)] If $2\alpha^{\frac{1}{2}}\beta^{\frac{1}{2}}+a-1>0$, then  $c^*_+>0$.
  \item [(ii)] If $2\alpha^{\frac{1}{2}}\beta^{\frac{1}{2}}+a-1=0$, then $c^*_+=0$.
  \item [(iii)] If $2\alpha^{\frac{1}{2}}\beta^{\frac{1}{2}}+a-1<0$, then  $c^*_+<0$.
\end{itemize}
\end{prop}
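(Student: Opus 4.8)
The plan is to reduce the sign question for $c^*_+$ entirely to Corollary \ref{cor:sign}, whose three-way dichotomy matches the three cases of the statement once the minimal value $h_0:=\inf_{\mu>0}h(\mu)$ has been computed explicitly. Recall from \eqref{3.4} that $c^*_+=\inf_{\mu>0}\Phi^+(\mu)$ with $\Phi^+(\mu)=h(\mu)/\mu$ and $h(\mu)=a-1+\alpha e^{\mu}+\beta e^{-\mu}$.

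First I would verify the hypotheses needed to invoke the machinery of Section 2. Conditions (L1)--(L3) are already observed to hold for $\lambda(\mu)=e^{h(\mu)}$. The crucial additional hypothesis is (L4$'$), namely $\Phi(+\infty)=+\infty$: since $\alpha>0$, the term $\alpha e^{\mu}$ forces $h(\mu)\to+\infty$ exponentially, hence $\Phi^+(\mu)=h(\mu)/\mu\to+\infty$ as $\mu\to+\infty$. Thus Corollary \ref{cor:sign} applies to $c^*_+$ verbatim, and it remains only to identify $h_0$.

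Next I would compute $h_0=\min_{\mu\geq 0}h(\mu)$ by elementary calculus. Differentiating, $h'(\mu)=\alpha e^{\mu}-\beta e^{-\mu}$, which vanishes exactly at $\mu^*=\tfrac{1}{2}\ln(\beta/\alpha)$; moreover $h''(\mu)=\alpha e^{\mu}+\beta e^{-\mu}>0$, so $h$ is strictly convex and this stationary point is the unique global minimizer. Here is where the extra hypothesis $\beta>\alpha>0$ enters in an essential way: it guarantees $\beta/\alpha>1$, so that $\mu^*>0$ lies in the interior of $[0,+\infty)$ and is therefore where the minimum over $[0,+\infty)$ is attained (for $\alpha\geq\beta$ the minimizer would fall at or below $0$, a different regime captured by Proposition \ref{prop3.3}). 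Substituting $e^{\mu^*}=(\beta/\alpha)^{1/2}$ gives $\alpha e^{\mu^*}=\beta e^{-\mu^*}=\alpha^{\frac{1}{2}}\beta^{\frac{1}{2}}$, whence $h_0=h(\mu^*)=2\alpha^{\frac{1}{2}}\beta^{\frac{1}{2}}+a-1$.

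Finally, I would read off the conclusion directly: the sign of $h_0=2\alpha^{\frac{1}{2}}\beta^{\frac{1}{2}}+a-1$ governs the sign of $c^*_+$ through the three cases of Corollary \ref{cor:sign}, yielding (i), (ii), and (iii) respectively. The only genuinely load-bearing step is the localization of the minimizer at $\mu^*>0$ via $\beta>\alpha$; everything else is bookkeeping, and I do not anticipate a serious obstacle, since the convexity supplied by (L3) already certifies that the interior critical point is a true global minimum.
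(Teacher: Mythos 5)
Your proposal is correct and follows essentially the same route as the paper: verify (L1)--(L3) and (L4$'$), compute $\min_{\mu\geq 0}h(\mu)=h\bigl(\tfrac{1}{2}\ln\tfrac{\beta}{\alpha}\bigr)=2\alpha^{\frac{1}{2}}\beta^{\frac{1}{2}}+a-1$, and invoke Corollary \ref{cor:sign}. The only difference is that you spell out the calculus (critical point, convexity, and why $\beta>\alpha$ places the minimizer in the interior) that the paper leaves as ``obvious.''
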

\begin{proof} It is obvious to see that (L1)--(L3) and (L4$'$) hold. Notice that that$$\min\limits_{\mu\in(0,+\infty)}h(\mu)=h(\mu_+^0)=2\alpha^{\frac{1}{2}}\beta^{\frac{1}{2}}+a-1,$$ where $\mu_+^0=\frac{1}{2}\ln\frac{\beta}{\alpha}>0$.
Then the conclusion can be easily checked by Corollary \ref{cor:sign}.
\end{proof}

\begin{prop}\label{prop:CNN:threhold:beta>alpha=0}
 Assume that {\rm (H)} holds. In addition, $a+\beta>1$ and  $\beta>\alpha=0$, then we can obtain the following conclusions.
\begin{itemize}
  \item [(i)] If $0\leq a<1$, then $c^*_+<0$.
  \item [(ii)] If $a\geq1$, then $c^*_+=0$.
\end{itemize}
\end{prop}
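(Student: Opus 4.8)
The plan is to notice that the degeneration $\alpha=0$ turns the rightward generating exponent into a \emph{bounded} function, which places us in the regime governed by hypothesis {\rm (L4)} and Theorem \ref{thm1.1}. Once that is recognized, the whole statement reduces to reading off the sign of $\lambda(+\infty)-1$, with the two cases $a\ge 1$ and $0\le a<1$ corresponding exactly to the two alternatives of that theorem.

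First I would specialize the objects of Subsection 3.1 to $\alpha=0$. Then $h(\mu)=a-1+\beta e^{-\mu}$ and $\lambda(\mu)=e^{a-1+\beta e^{-\mu}}$, so $\Phi^+(\mu)=h(\mu)/\mu$. I would verify {\rm (L1)--(L3)}: positivity of $\lambda$ is immediate; $\lambda(0)=e^{a-1+\beta}>1$ is precisely the standing hypothesis $a+\beta>1$; and $\ln\lambda(\mu)=a-1+\beta e^{-\mu}$ has second derivative $\beta e^{-\mu}>0$, giving convexity in $\mu$.

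The key structural point, and the only one needing a short computation, is that $\Phi^+(+\infty)=0$, i.e.\ that {\rm (L4)} holds. Indeed, since $\beta e^{-\mu}\to 0$ as $\mu\to+\infty$, the numerator $h(\mu)$ decreases to the finite limit $a-1$ and hence stays bounded, so $\Phi^+(\mu)=h(\mu)/\mu\to 0$. This is exactly where the present case parts company with Proposition \ref{prop:CNN:threhold:beta>alpha>0}: with $\alpha=0$ the term $\alpha e^{\mu}$ is absent, $\Phi^+(+\infty)$ is finite rather than $+\infty$, and the infimum is approached as $\mu\to+\infty$ instead of being attained at an interior $\mu^0_+$. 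We are therefore in the setting of Theorem \ref{thm1.1} (hypothesis {\rm (L4)}), not that of Corollary \ref{cor:sign} (hypothesis {\rm (L4$'$)}).

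With {\rm (L4)} in hand, it only remains to compute $\lambda(+\infty)=\lim_{\mu\to+\infty}e^{a-1+\beta e^{-\mu}}=e^{a-1}$ and invoke Theorem \ref{thm1.1}. If $a\ge 1$ then $\lambda(+\infty)=e^{a-1}\ge 1$, so part (i) of that theorem gives $c^*_+=0$, which is assertion (ii); if $0\le a<1$ then $\lambda(+\infty)=e^{a-1}<1$, so part (ii) gives $c^*_+<0$, which is assertion (i). Since the argument is a clean reduction to an already-established theorem, there is no genuine obstacle; the one point that must not be overlooked is that setting $\alpha=0$ forces $\Phi^+(+\infty)$ to be finite, so that {\rm (L4)} — rather than {\rm (L4$'$)} — is the hypothesis one is entitled to apply.
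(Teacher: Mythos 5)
Your proposal is correct and follows essentially the same route as the paper: verify {\rm (L1)--(L3)}, observe that $h(\mu)=a-1+\beta e^{-\mu}$ stays bounded so $\Phi^+(+\infty)=0$ and {\rm (L4)} holds, compute $\lambda(+\infty)=e^{a-1}$, and conclude by Theorem \ref{thm1.1}. The paper's own proof is just a terser version of this argument; your added remark contrasting the {\rm (L4)} regime with the {\rm (L4$'$)} regime of Proposition \ref{prop:CNN:threhold:beta>alpha>0} is accurate but not needed.
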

\begin{proof} It is obvious that (L1)--(L3) hold under the condition that  $a+\beta>1$ and  $\beta>\alpha=0$. Since $\lim_{\mu\rightarrow+\infty}\Phi^+(\mu)=\lim_{\mu\rightarrow+\infty}\frac{h(\mu)}{\mu}=0$, (L4) also holds. Note that $\lambda^+(+\infty)=e^{a-1}$. Thus, when $0\leq a< 1$,
 we have $\lambda^+(+\infty)< 1$, which implies that $c^*_+<0$ by Theorem \ref{thm1.1}. When $a\geq 1$, then $\lambda^+(+\infty)\geq 1$ and $c^*_+=0$.
\end{proof}
\begin{remark}
For the cases $\alpha>\beta>0$ and $\alpha>\beta=0$, we can obtain the corresponding results similar to Propositions \ref{prop3.3}--\ref{prop:CNN:threhold:beta>alpha=0} (see, Tables 1 and 2).
\end{remark}

\subsection{Continuity of spreading speeds}
In this subsection, we consider CNNs with the variable template $[\alpha_n,a_n,\beta_n]$ as follows:
\begin{align}\label{3.6}
\frac{dx_i(t)}{dt} =-x_i(t)+\alpha_n f(x_{i-1}(t))+a_nf(x_{i}(t))+\beta_n f(x_{i+1}(t)),\,\,i\in\mathbb{Z},
\end{align}
where the nonnegative parameters $\alpha_n,a_n,\beta_n$ $(n\in\mathbb{N})$ satisfy
\begin{itemize}
  \item [(P)] $
\lim\limits_{n\rightarrow+\infty}\alpha_n=\alpha, \lim\limits_{n\rightarrow+\infty}a_n=a\,\,\mbox{and}\,\,\lim\limits_{n\rightarrow+\infty}\beta_n=\beta,
$
where $\alpha,a,\beta$ satisfy the assumption (H).
\end{itemize}
We mainly investigate the relation between the spreading speeds of CNNs with the template $[\alpha_n,a_n,\beta_n]$ and with the template $[\alpha,a,\beta]$.

According to the assumption (P), there exists a sufficiently large number $N_0\in\mathbb{N}$ such that $\alpha_n,a_n,\beta_n$ $(n\in\mathbb{N})$ also satisfy (H) for $n>N_0$. Thus it follows from Theorem \ref{thm3.1} that, for any $n>N_0$, \eqref{3.6} admits the right and left spreading speeds
$$
{c_n}^*_+=\inf_{\mu>0}\Phi_n^+(\mu)=\inf_{\mu>0}\frac{\ln \lambda_n(\mu)}{\mu}=\inf_{\mu>0}\frac{a_n-1+\alpha_n e^{\mu}+\beta_n e^{-\mu}}{\mu}
$$
and
$$
{c_n}^*_-=\inf_{\mu>0}\Phi_n^-(\mu)=\inf_{\mu>0}\frac{\ln \lambda_n(-\mu)}{\mu}=\inf_{\mu>0}\frac{a_n-1+\alpha_n e^{-\mu}+\beta_n e^{\mu}}{\mu}.
$$

\begin{prop}\label{prop3.5}
Assume that {\rm (H) and (P)} hold. Then $\lim\limits_{n\rightarrow+\infty}{c_n}^*_{\pm}=c^*_{\pm}$.
\end{prop}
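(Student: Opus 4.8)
The plan is to reduce Proposition \ref{prop3.5} to Theorem \ref{thm:conti} by a monotone sandwich argument. The difficulty is that assumption (P) provides only the convergence $\alpha_n\to\alpha$, $a_n\to a$, $\beta_n\to\beta$, and this convergence need not be monotone, whereas Theorem \ref{thm:conti} requires the approximating eigenvalues $\lambda_n$ to increase or decrease monotonically to $\lambda$. So Theorem \ref{thm:conti} cannot be invoked directly. To get around this I would bracket the possibly non-monotone family $\lambda_n$ between two explicitly constructed monotone families obtained by perturbing the template by $\pm\epsilon$, apply Theorem \ref{thm:conti} to each bracketing family, and then close the gap by letting $\epsilon\to 0$.

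Concretely, fix a sequence $\epsilon_k\downarrow 0$ and, for each $k$, form the perturbed coefficients $\overline{\alpha}_k=\alpha+\epsilon_k$, $\overline{a}_k=a+\epsilon_k$, $\overline{\beta}_k=\beta+\epsilon_k$ and $\underline{\alpha}_k=(\alpha-\epsilon_k)^+$, $\underline{a}_k=(a-\epsilon_k)^+$, $\underline{\beta}_k=(\beta-\epsilon_k)^+$, with associated principal eigenvalues
$$
\overline{\lambda}_k(\mu)=e^{\overline{a}_k-1+\overline{\alpha}_k e^{\mu}+\overline{\beta}_k e^{-\mu}},\qquad
\underline{\lambda}_k(\mu)=e^{\underline{a}_k-1+\underline{\alpha}_k e^{\mu}+\underline{\beta}_k e^{-\mu}}.
$$
Since $(\alpha,a,\beta)$ satisfies (H) with strict inequalities, for all $k$ large enough the lower template still obeys $\underline{\alpha}_k+\underline{\beta}_k>0$ and $\underline{a}_k+\underline{\alpha}_k+\underline{\beta}_k>1$, so both $\overline{\lambda}_k$ and $\underline{\lambda}_k$ verify (L1)--(L3). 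By construction $\overline{\lambda}_k(\mu)$ decreases to $\lambda(\mu)$ and $\underline{\lambda}_k(\mu)$ increases to $\lambda(\mu)$ monotonically as $k\to\infty$, and, because $\overline{\lambda}_k'(\mu)=(\overline{\alpha}_k e^{\mu}-\overline{\beta}_k e^{-\mu})\overline{\lambda}_k(\mu)$ (and similarly for $\underline{\lambda}_k$), the derivatives converge to $\lambda'(\mu)$ for every $\mu$. Writing $\overline{c}^{\,*}_{+,k}=\inf_{\mu>0}\frac{\ln\overline{\lambda}_k(\mu)}{\mu}$ and $\underline{c}^{\,*}_{+,k}=\inf_{\mu>0}\frac{\ln\underline{\lambda}_k(\mu)}{\mu}$, Theorem \ref{thm:conti} then yields $\overline{c}^{\,*}_{+,k}\to c^*_+$ and $\underline{c}^{\,*}_{+,k}\to c^*_+$ as $k\to\infty$.

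Next I would insert the sandwich. For each fixed $k$, assumption (P) gives an index $N_k$ such that $\underline{\alpha}_k\le\alpha_n\le\overline{\alpha}_k$, $\underline{a}_k\le a_n\le\overline{a}_k$ and $\underline{\beta}_k\le\beta_n\le\overline{\beta}_k$ for all $n>N_k$. Because $e^{\mu}\ge 0$ and $e^{-\mu}\ge 0$, this forces $\underline{\lambda}_k(\mu)\le\lambda_n(\mu)\le\overline{\lambda}_k(\mu)$ for all $\mu\ge 0$ and $n>N_k$, hence $\underline{c}^{\,*}_{+,k}\le {c_n}^*_+\le\overline{c}^{\,*}_{+,k}$ for all $n>N_k$. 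Since the outer quantities do not depend on $n$, taking the lower and upper limits in $n$ gives
$$
\underline{c}^{\,*}_{+,k}\le\liminf_{n\to\infty}{c_n}^*_+\le\limsup_{n\to\infty}{c_n}^*_+\le\overline{c}^{\,*}_{+,k},
$$
and then letting $k\to\infty$ squeezes both sides to $c^*_+$, so $\lim_{n\to\infty}{c_n}^*_+=c^*_+$. The statement for $c^*_-$ follows verbatim after replacing $\Phi^+$ by $\Phi^-$, equivalently interchanging the roles of $\alpha_n e^{\mu}$ and $\beta_n e^{-\mu}$.

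The main obstacle is precisely the loss of monotonicity in hypothesis (P): Theorem \ref{thm:conti} is stated only for one-sided monotone approximations, while here $\lambda_n$ may oscillate around $\lambda$. The monotone sandwich is the device that repairs this, and the only points needing care are (a) checking that the lower bracketing templates still obey (H), hence (L1)--(L3), for $k$ large (guaranteed by the strictness of the inequalities in (H)), and (b) verifying the derivative-convergence hypothesis of Theorem \ref{thm:conti} for both bracketing families, which is immediate from the explicit exponential form of $\overline{\lambda}_k$ and $\underline{\lambda}_k$.
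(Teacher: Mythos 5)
Your proof is correct and follows essentially the same strategy as the paper: bracket the non-monotone family $\lambda_n$ between two monotone families satisfying the hypotheses of Theorem \ref{thm:conti}, apply that theorem to each bracket, and squeeze. The only cosmetic difference is that the paper constructs its brackets from tail extrema, $\bar{\alpha}_n=\max_{k\geq n}\{\alpha,\alpha_k\}$ and $\underline{\alpha}_n=\min_{k\geq n}\{\alpha,\alpha_k\}$ (so the squeeze runs over a single index and uses Lemma 2.9 of Liang--Zhao for the comparison ${\underline{c}_n}^*_+\leq {c_n}^*_+\leq{\overline{c}_n}^*_+$), whereas you use $\epsilon_k$-perturbations of the limiting template and a double limit in $n$ and $k$; both are equivalent repairs of the missing monotonicity in (P).
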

\begin{proof}
 We will verify $\lim\limits_{n\rightarrow+\infty}{c_n}^*_+=c^*_+$ by using Theorem \ref{thm:conti}. The other case can be derived by the same method.

Define $\bar{\alpha}_n=\max\limits_{k\geq n}\{\alpha,\alpha_k\}$ and $\underline{\alpha}_n=\min\limits_{k\geq n}\{\alpha,\alpha_k\}$. It is not hard to verify that $\{\bar{\alpha}_n\}_{n\in\mathbb{N}}$ and $\{\underline{\alpha}_n\}_{n\in\mathbb{N}}$ are nonincreasing and nondecreasing sequences, respectively. Moreover, $\bar{\alpha}_n\geq \alpha\geq\underline{\alpha}_n, \forall \,n \geq 1$. According to the assumption (P), we can obtian that
$$
\lim\limits_{n\rightarrow+\infty}\bar{\alpha}_n=\alpha\,\,\mbox{and}\,\,\lim\limits_{n\rightarrow+\infty}\underline{\alpha}_n=\alpha.
$$
Similarly, for any $n \geq 1$, define
$$\bar{a}_n=\max\limits_{k\geq n}\{a,a_k\},~ \underline{a}_n=\min\limits_{k\geq n}\{a,a_k\},~ \bar{\beta}_n=\max\limits_{k\geq n}\{\beta,\beta_k\},~\underline{\beta}_n=\min\limits_{k\geq n}\{\beta,\beta_k\}
$$ with
$$
\lim\limits_{n\rightarrow+\infty}\bar{a}_n=a,\,\,\lim\limits_{n\rightarrow+\infty}\underline{a}_n=a,\,\,
\lim\limits_{n\rightarrow+\infty}\bar{\beta}_n=\beta\,\,\mbox{and}\,\,\lim\limits_{n\rightarrow+\infty}\underline{\beta}_n=\beta
$$
and
$$\bar{a}_n\geq a\geq\underline{a}_n,\,\,\bar{\beta}_n\geq \beta\geq\underline{\beta}_n.$$
According to (P), there exists sufficiently a large number $N_1\in\mathbb{N}$ such that $$\bar{\alpha}_n+\bar{a}_n+\bar{\beta}_n>1\,\, \mbox{and}\,\, \underline{\alpha}_n+\underline{a}_n+\underline{\beta}_n>1$$ for any $n\geq N_1$. Thus,  for any $n>N_0$, \eqref{3.6} with the templates $[\bar{\alpha}_n,\bar{a}_n,\bar{\beta}_n]$ and $[\underline{\alpha}_n,\underline{a}_n,\underline{\beta}_n]$ admits the right spreading speed ${\overline{c}_n}^*_+$ and ${\underline{c}_n}^*_+$, respectively. In view of Lemma 2.9 in \cite{liang2007asymptotic}, we have
\begin{align}\label{3.7}
{\underline{c}_n}^*_+\leq {c_n}^*_+\leq{\overline{c}_n}^*_+
\end{align}
 for all $n>\max\{N_0,N_1\}$.

On the other hand, we can verify that $\bar{\lambda}_n(\mu)$ and $\underline{\lambda}_n(\mu)$ corresponding to the definition of ${\lambda}_n(\mu)$ is nonincreasing and nondecreasing on $n\in\mathbb{N}$.  Moreover, $\underline{\lambda}_n(\mu)\leq \lambda(\mu)\leq \overline{\lambda}_n(\mu)$ for any $n\in\mathbb{N}$ and $\lim\limits_{n\rightarrow+\infty}\underline{\lambda}_n(\mu)=\lim\limits_{n\rightarrow+\infty}\overline{\lambda}_n(\mu)=\lambda(\mu)$  for any closed set on $(0,+\infty)$. According to Theorem \ref{thm:conti}, we can obtain that
\begin{align}\label{3.8}
\lim\limits_{n\rightarrow+\infty}{\underline{c}_n}^*_+=\lim\limits_{n\rightarrow+\infty}{\overline{c}_n}^*_+= {c}^*_+.
\end{align}
Thus, it follows from \eqref{3.7} and \eqref{3.8} that
$$
\lim\limits_{n\rightarrow+\infty}{{c}_n}^*_+= {c}^*_+.
$$

\end{proof}
\begin{remark}
	Define the sets
	$$I=\{(\alpha,a,\beta)\,|\,\alpha,a,\beta\geq0,\,\alpha+\beta>0, \alpha+a+\beta>1\},$$
	$$
	U=\{(\alpha,a,\beta)\,|\,\alpha=0,a\geq 1,\beta>0\},
	$$
	and
	$$
	V=\{(\alpha,a,\beta)\,|\,\alpha>0,a\geq 1,\beta=0\}.
	$$
	It is worth pointing out that when $[\alpha_n,a_n,\beta_n] \in I\setminus (U\cup V)$ for any $n \geq 1$ and  $[\alpha,a,\beta]\in U$ and {\rm (P)} holds, an interesting phenomenon occurs, that is, $\Phi_n^+(+\infty)=+\infty$ while $\Phi^+(+\infty)=0$.
\end{remark}

\subsection{Discussion about the limiting cases}
In this subsection, we estimate the spreading speed of the limiting cases for
\begin{align}\label{equ:evo:a_alpha_beta_s}
\frac{dx_i(t)}{dt} =-x_i(t)+\alpha(s)f(x_{i-1}(t))+a(s)f(x_i(t))+\beta(s) f(x_{i+1}(t)),\,\,i\in\mathbb{Z},
\end{align}
where the nonnegative parameters $\alpha(s)$, $a(s)$ and $\beta(s)$
 satisfy the following assumption:
\begin{itemize}
\item [(S1)]  the continuous functions $\alpha(s)$, $a(s)$ and $\beta(s)$ are strictly increasing on $s\in [0,+\infty)$.
  \item [(S2)] $
\lim\limits_{s\rightarrow0^+}\alpha(s)=\alpha, \lim\limits_{s\rightarrow0^+}a(s)=a\,\,\mbox{and}\,\,\lim\limits_{s\rightarrow0^+}\beta(s)=\beta,
$
where $\alpha,a,\beta$ satisfy the following condition (H$'$): $\alpha,a,\beta\geq0$, $\alpha+\beta>0$ and $\alpha+a+\beta=1$.
\end{itemize}

Notice that Theorem \ref{thm3.1} does not hold for $s=0$. According to (S1) and (S2), it is easily seen that
$$\alpha(s)+a(s)+\beta(s)>1\quad\mbox{and}\quad a(s)+\beta(s)>0,~ \forall s >0.$$ Therefore, it follows from Theorem \ref{thm3.1} that for any $s>0$, \eqref{equ:evo:a_alpha_beta_s} admits the right and left spreading speeds
$$
{c(s)}^*_+=\inf_{\mu>0}\Phi^+(s,\mu)
$$
and
$$
{c(s)}^*_-=\inf_{\mu>0}\Phi^-(s,\mu),
$$
where
$$
\Phi^\pm(s,\mu)=\frac{\ln \Lambda^\pm(s,\mu)}{\mu}
$$
and
$$\Psi^\pm(s,\mu)=\frac{\Lambda_{\mu}^\pm(s,\mu)}{\Lambda^\pm(s,\mu)}=
\pm\alpha(s)e^{\pm\mu}\mp\beta(s)e^{\mp\mu}$$
That is, we investigate that where $c_\pm^*(s)$ will go, when $s$ goes to $0$. It is easily verified that $\Lambda^+(s,\mu)$ and $\Lambda^-(s,\mu)$ satisfy (K1)--(K4). The following conclusions hold from Theorem \ref{thm2.6}.
\begin{prop}\label{prop3.7}Assume that (S1) and (S2) hold. Then $\lim\limits_{s\rightarrow 0^+}{c(s)}^*_+=\alpha-\beta$ and $\lim\limits_{s\rightarrow 0^+}{c(s)}^*_-=-\alpha+\beta$.
\end{prop}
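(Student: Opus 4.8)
The plan is to read both limits directly off Theorem \ref{thm2.6}, applied once with $\Lambda=\Lambda^+$ and once with $\Lambda=\Lambda^-$. The key observation is that the objects $\Phi^\pm(s,\mu)$, $\Psi^\pm(s,\mu)$, $c(s)^*_\pm$ of this subsection coincide with the abstract objects $\tPhi(s,\mu)$, $\tPsi(s,\mu)$, $c^*(s)$ of subsection 2.3 once we take
$$
\Lambda^\pm(s,\mu)=e^{a(s)-1+\alpha(s)e^{\pm\mu}+\beta(s)e^{\mp\mu}}.
$$
Hence the entire argument reduces to checking that $\Lambda^\pm$ satisfies (K1)--(K4) and then evaluating the limit value $\tPsi(0,0)$ that Theorem \ref{thm2.6} delivers.

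First I would verify the four hypotheses, writing $\Lambda^+$ out explicitly (the case $\Lambda^-$ being identical after interchanging $e^{\mu}$ and $e^{-\mu}$). Setting $H^+(s,\mu)=\ln\Lambda^+(s,\mu)=a(s)-1+\alpha(s)e^{\mu}+\beta(s)e^{-\mu}$, one has (K1) $\Lambda^+(0,0)=e^{a-1+\alpha+\beta}=e^0=1$, using $\alpha(0)=\alpha$, $a(0)=a$, $\beta(0)=\beta$ (continuity together with (S2)) and the limiting relation $\alpha+a+\beta=1$ from (H$'$); (K3) $H^+_{\mu\mu}(s,\mu)=\alpha(s)e^{\mu}+\beta(s)e^{-\mu}\geq 0$, since the templates are nonnegative, so $H^+$ is convex in $\mu$; and (K4) $H^+_{\mu\mu}(0,0)=\alpha+\beta>0$, which is exactly the condition $\alpha+\beta>0$ in (H$'$). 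For (K2) I would differentiate through the exponential to get $\Lambda^+_s(0,0)=\Lambda^+(0,0)\bigl(a'(0)+\alpha'(0)+\beta'(0)\bigr)=a'(0)+\alpha'(0)+\beta'(0)$, whose positivity is intended to come from the strict monotonicity in (S1).

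With (K1)--(K4) established, Theorem \ref{thm2.6} yields $\lim_{s\to 0^+}c(s)^*_+=\tPsi(0,0)=\Psi^+(0,0)$, and the stated value follows from the closed form $\Psi^+(s,\mu)=\alpha(s)e^{\mu}-\beta(s)e^{-\mu}$ evaluated at $s=0$, $\mu=0$, giving $\Psi^+(0,0)=\alpha-\beta$. Repeating the same verification for $\Lambda^-$ gives $\lim_{s\to 0^+}c(s)^*_-=\Psi^-(0,0)=-\alpha+\beta$, which completes the proof.

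The one delicate point is hypothesis (K2). As stated, (S1) only asserts that $\alpha(s),a(s),\beta(s)$ are continuous and strictly increasing, and strict monotonicity neither guarantees that the one-sided $s$-derivative at $0$ exists nor that it is strictly positive. Implicit in subsection 2.3, however, is that $\Lambda\in C^2([0,s_0)\times[0,+\infty))$, so applying Theorem \ref{thm2.6} here forces the template functions to be at least $C^2$ near $s=0$ with $a'(0)+\alpha'(0)+\beta'(0)>0$. I would therefore make this regularity explicit (or strengthen (S1) accordingly) precisely so that $\Lambda^\pm_s(0,0)>0$ is genuinely available; once that is granted, the remainder is a routine substitution into Theorem \ref{thm2.6}.
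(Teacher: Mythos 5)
Your proposal is correct and takes essentially the same route as the paper, whose entire proof consists of asserting that $\Lambda^{\pm}(s,\mu)$ satisfy (K1)--(K4) and then applying Theorem \ref{thm2.6} to read off $\lim_{s\rightarrow 0^{+}}c(s)^{*}_{\pm}=\Psi^{\pm}(0,0)$, with $\Psi^{+}(0,0)=\alpha-\beta$ and $\Psi^{-}(0,0)=-\alpha+\beta$ coming from the closed form of $\Psi^{\pm}$. Your closing caveat about (K2) is moreover a genuine catch rather than pedantry: the paper dismisses the hypotheses as ``easily verified,'' but (S1)--(S2) as stated guarantee neither the $C^{2}$ dependence on $s$ required by the setting of subsection 2.3 nor $\Lambda^{\pm}_{s}(0,0)=a'(0)+\alpha'(0)+\beta'(0)>0$ (for instance $\alpha(s)=\alpha+s^{3}$, $a(s)=a+s^{3}$, $\beta(s)=\beta+s^{3}$ are smooth and strictly increasing yet make (K2) fail), so the strengthening of (S1) you propose --- explicit regularity together with positivity of the derivative sum at $s=0$, which is what actually holds in the paper's special case \eqref{equ:evo:a_alpha_beta_s:special} via $\beta'(0)=1$ --- is exactly what a rigorous application of Theorem \ref{thm2.6} requires.
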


Now we consider a special version of \eqref{equ:evo:a_alpha_beta_s} as follows:
\begin{align}\label{equ:evo:a_alpha_beta_s:special}
\frac{dx_i(t)}{dt} =-x_i(t)+\alpha f(x_{i-1}(t))+af(x_{i}(t))+(\beta+s) f(x_{i+1}(t)),\,\,i\in\mathbb{Z},
\end{align}
where the nonnegative parameters $\alpha,a,\beta$ satisfy $\alpha+a+\beta=1$ and $\alpha+\beta>0$. It is obvious that (S1) and (S2) hold. Hence, it holds that
$\lim\limits_{s\rightarrow 0^+}{c(s)}^*_+=\alpha-\beta$ and $\lim\limits_{s\rightarrow 0^+}{c(s)}^*_-=-\alpha+\beta.$
\begin{remark}
From the above analysis, it is easy to see that $c(s)^*_-<0$ when $\alpha>\beta+s$ and $s$ is small enough.
\end{remark}
\subsection{Numerical analysis}
In this section, we compute the spreading speeds by numerical method. We first present the simulation under some parameters, which has been listed in Table \ref{tab:c}, and then compute the spreading speeds by the method in \cite[Section 4.2]{lutscher2008density}. Next, we will give the spreading speeds computed by the the formula \eqref{3.4} and \eqref{3.5}, which are approximate with the previous simulation results. We also show the sign of the spreading speed from Tables \ref{tab:alpha>0&beta>0} and \ref{tab:alpha||beta=0}, which admits with the above two results.
\begin{table}[H]
	\centering
	\begin{tabular}{|c|c|c|c|c|c|c|}
		\hline
		Parameters	&$c^*_-(S)$  & $c^*_+(S)$  & $c^*_-(\Phi)$ & $c^*_+(\Phi)$ & Sign of $c^*_-$ & Sign of $c^*_+$  \\
		\hline
		$\alpha$=0.5, $a$=1, $\beta$=0.5 & 1.43 & 1.43 & 1.51 & 1.51&positive& positive\\
		\hline		
		$\alpha$=0.05, $a$=0.5, $\beta$=0.5 & 0.69 & -0.23 & 0.70 & -0.23&positive& negative\\
		\hline
		$\alpha$=0.125, $a$=0.5, $\beta$=0.5 & 0.78 & -0.01 & 0.80 & 0.00&positive & zero\\
		\hline
		$\alpha$=0, $a$=1, $\beta$=0.5 & 1.31 & 0.00 & 1.36 & 0.00&positive& zero\\
		\hline
		$\alpha$=0, $a$=0.55, $\beta$=0.5 & 0.73 & -0.30 & 0.74 & -0.29&positive& negative \\
         \hline
	\end{tabular}
	\caption{$c^*_\pm(S)$, $c^*_\pm(\Phi)$ represent the spreading speeds computed by simulation(see, e.g., \cite[Section 4.2]{lutscher2008density}) as well as the formula \eqref{3.4} and \eqref{3.5}. Sign of $c^*_\pm$ are derived from Tables \ref{tab:alpha>0&beta>0} and \ref{tab:alpha||beta=0}. \label{tab:c}}
\end{table}
We now give more analysis for the case where $\alpha=0.5$, $a=1$, $\beta=0.5$, the case where $\alpha=0$, $a=1$, $\beta=0.5$ and the case where $\alpha=0$, $a=0.55$, $\beta=0.5$.
\begin{figure}[H]
	\subfigure{
	\centering
	\includegraphics[width=2in]{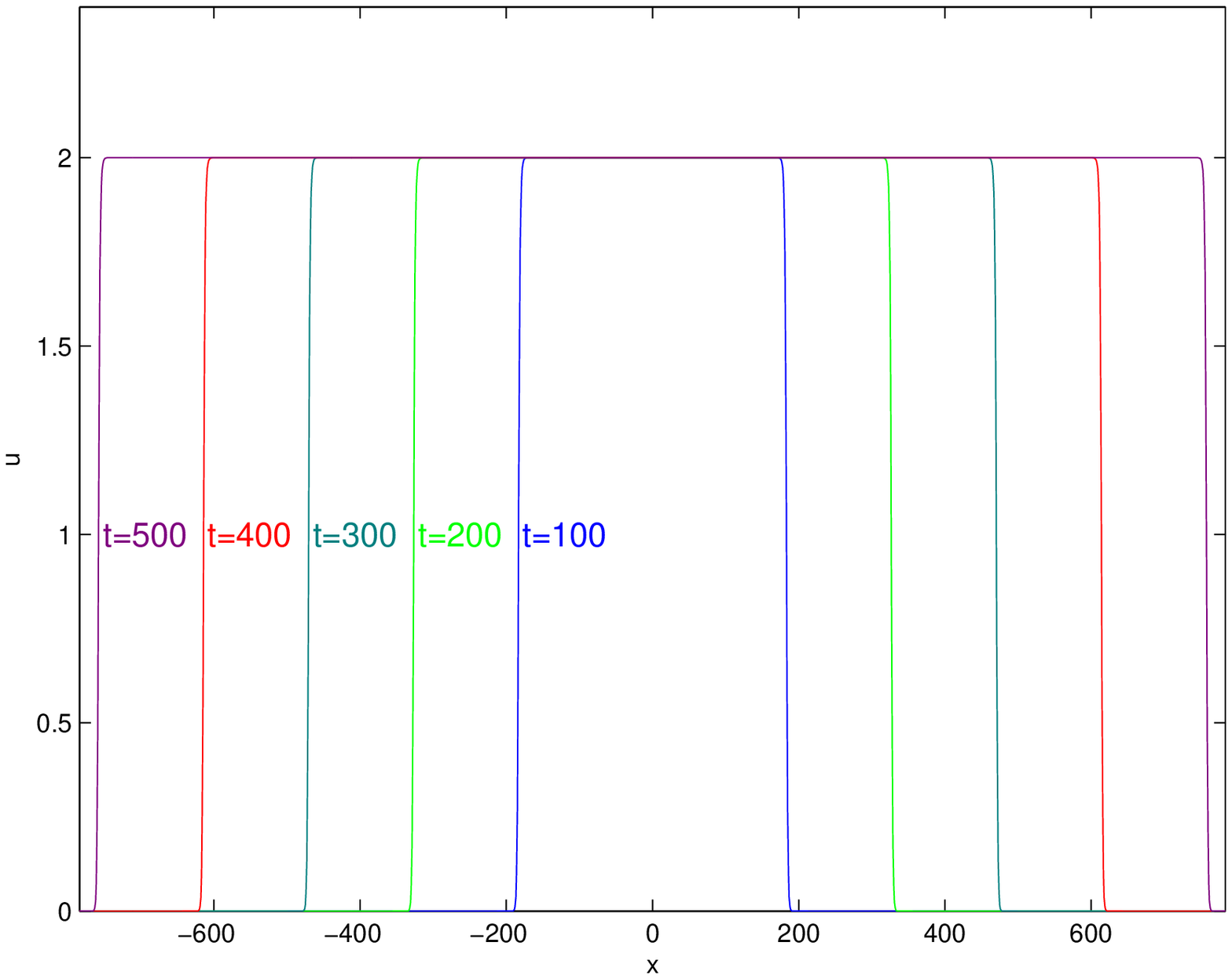}}
	\subfigure{
	\centering
	\includegraphics[width=2in]{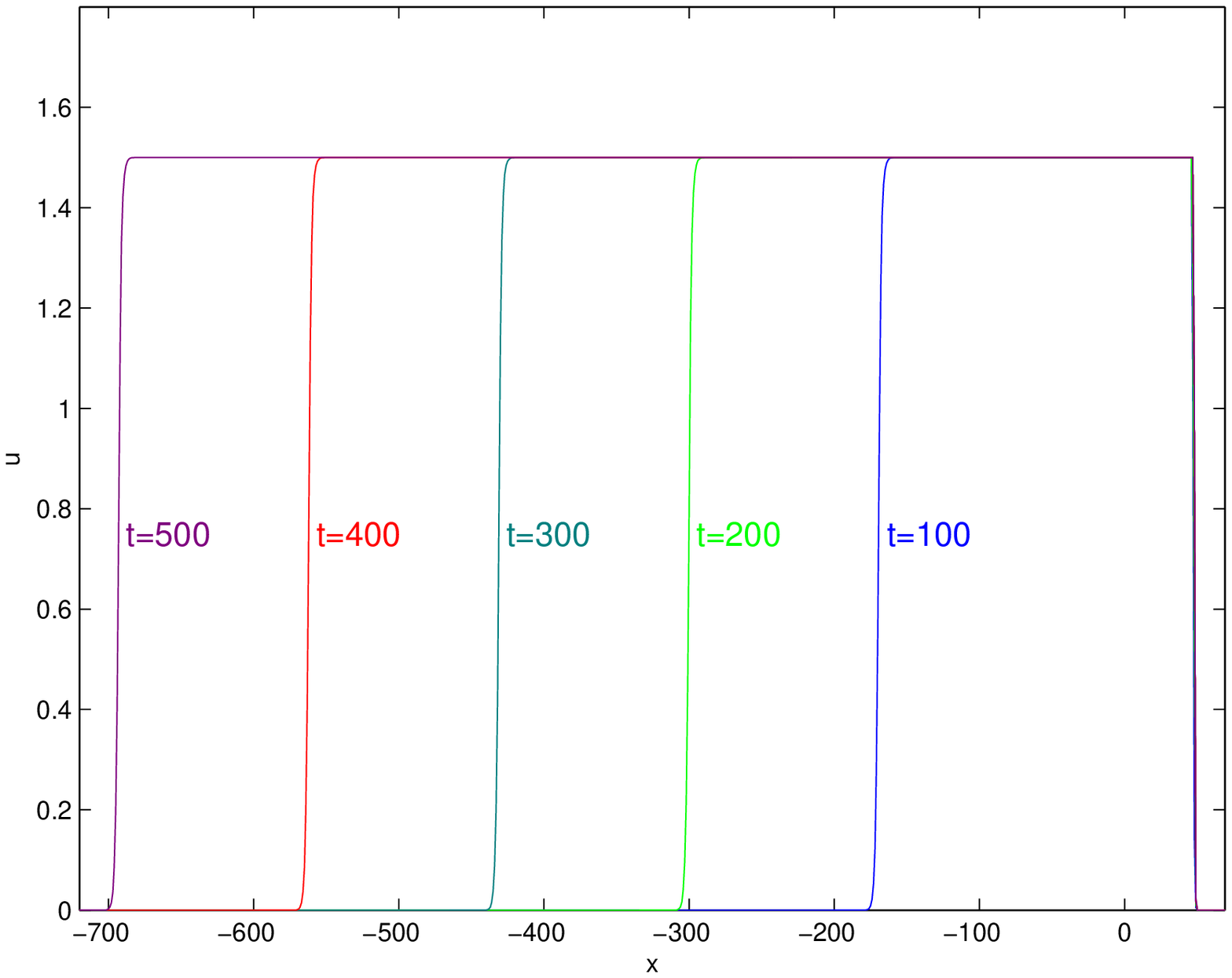}}
	\hfill
	\subfigure{
	\centering
	\includegraphics[width=2in]{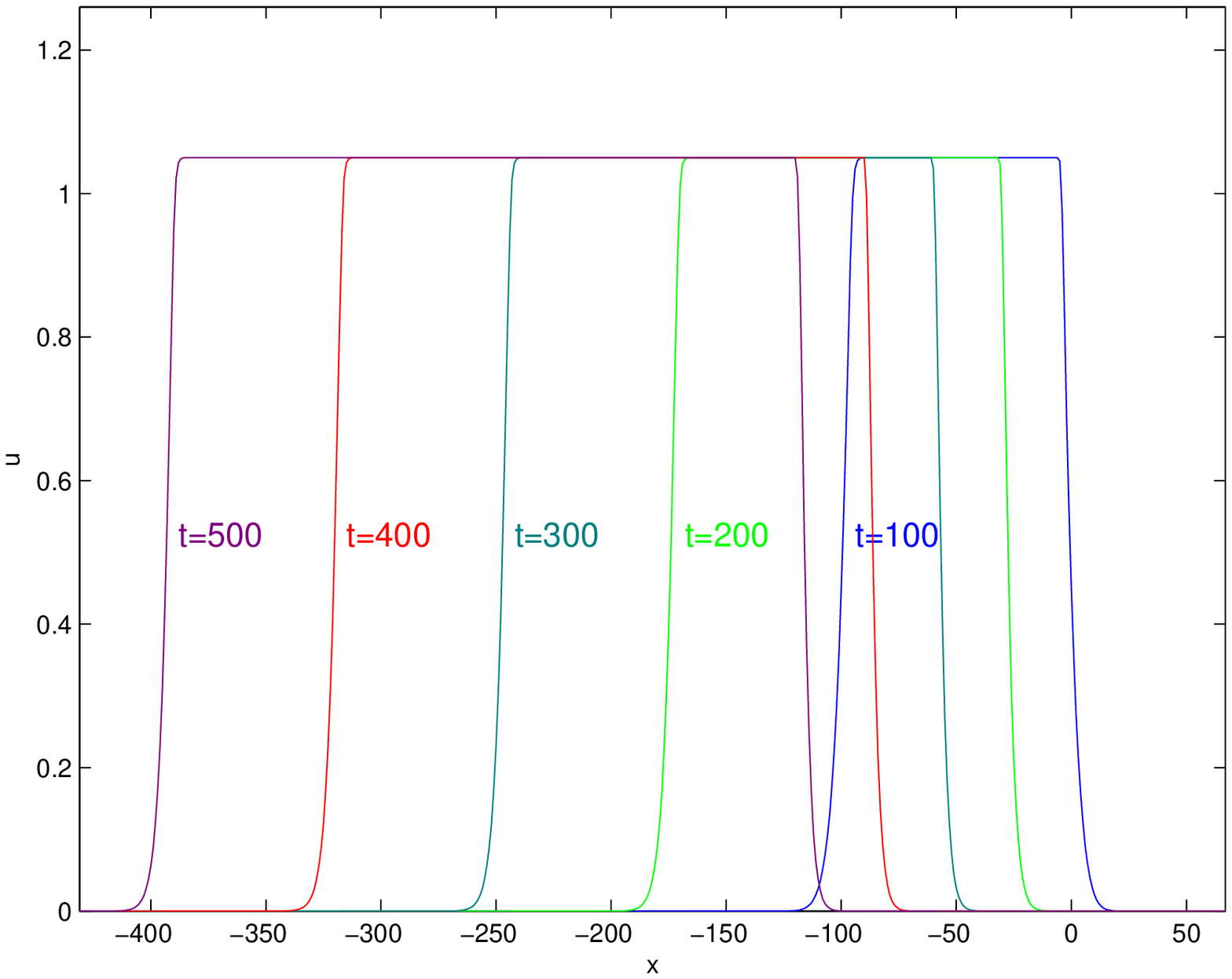}}
\caption{\label{figure:R0LR}  \small The parameters of left figure is $\alpha=0.5$, $a=1$, $\beta=0.5$, the simulation results imply that the signal will transfer to both sides. The parameters of middle figure is $\alpha=0$, $a=1$, $\beta=0.5$, the simulation results imply that the signal will transfer to left side and stop on the right side. The parameters of right figure is $\alpha=0$, $a=0.55$, $\beta=0.5$, the simulation results imply that the signal will transfer to left side and diminish on right side.}
\end{figure}
\begin{figure}[H]
	\centering
	\subfigure{
		\centering
		\includegraphics[width=3in]{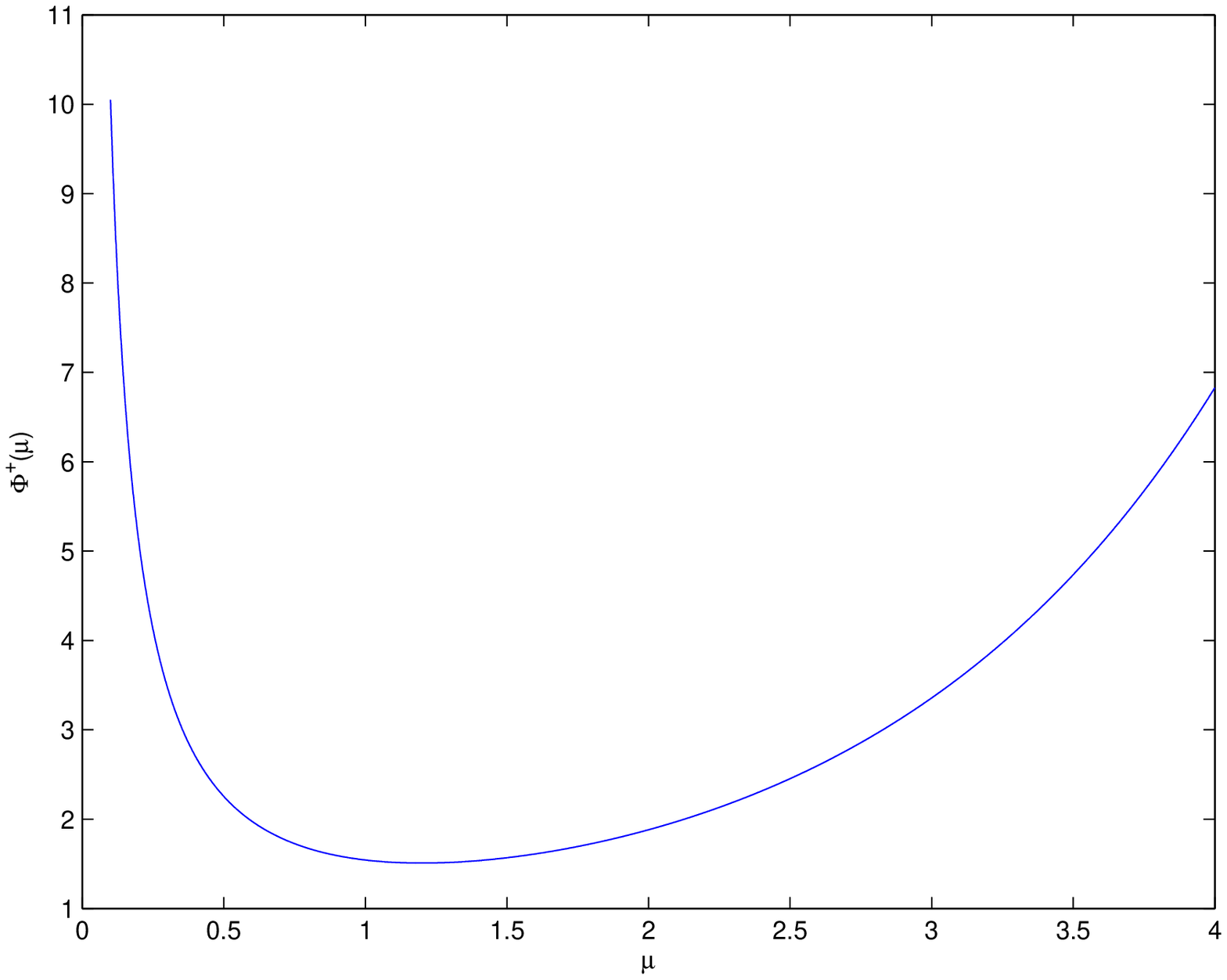}}
	\hfill
	\subfigure{\centering
		\includegraphics[width=3in]{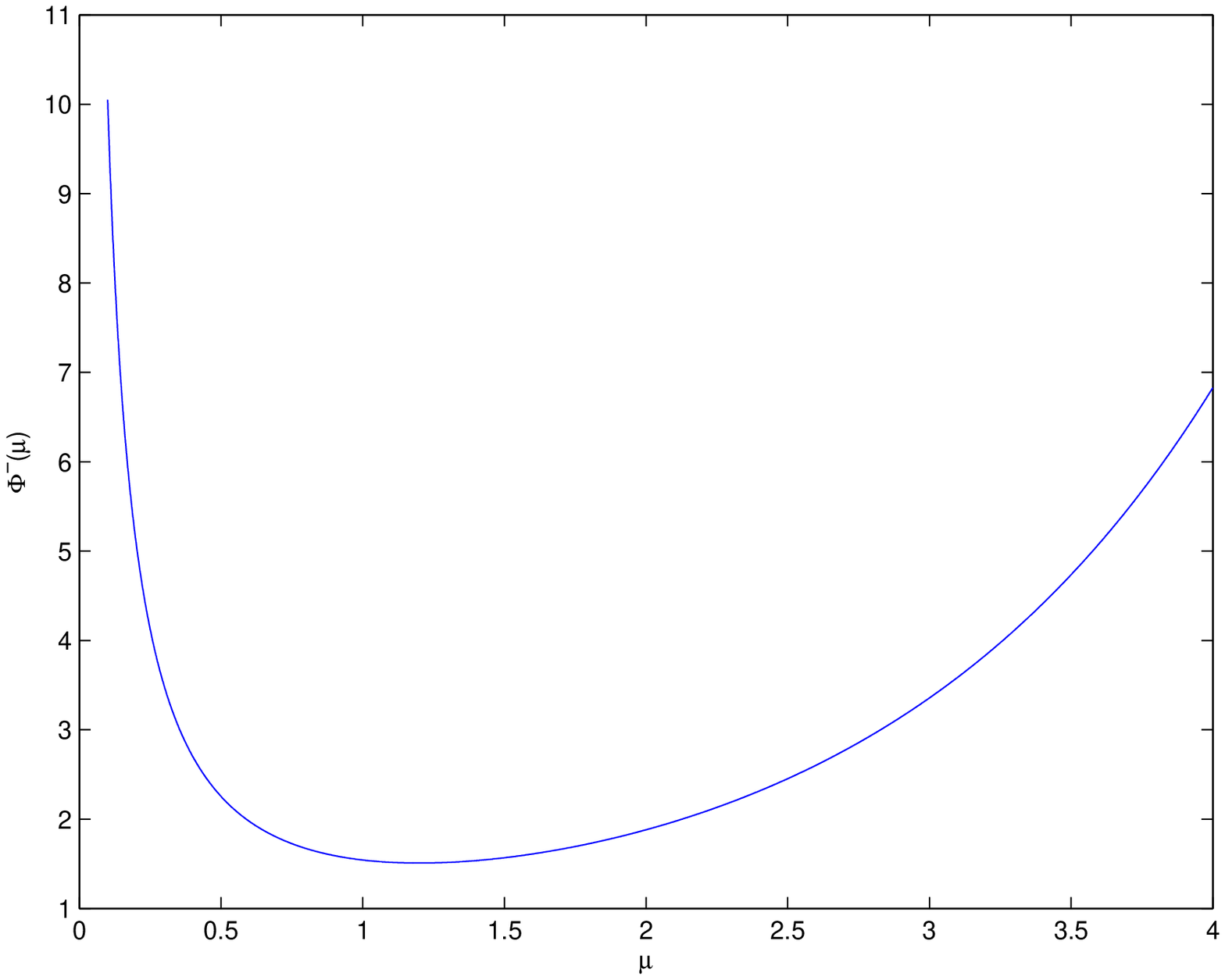}}
	\caption{  \small  $\alpha=0.5$, $a=1$, $\beta=0.5$, both of the infimum of $\Phi^-(\mu)$ and $\Phi^+(\mu)$ are positive and can be attained at some finite value.}
\end{figure}

\begin{figure}[H]
	\centering
	\subfigure{
		\centering
		\includegraphics[width=3in]{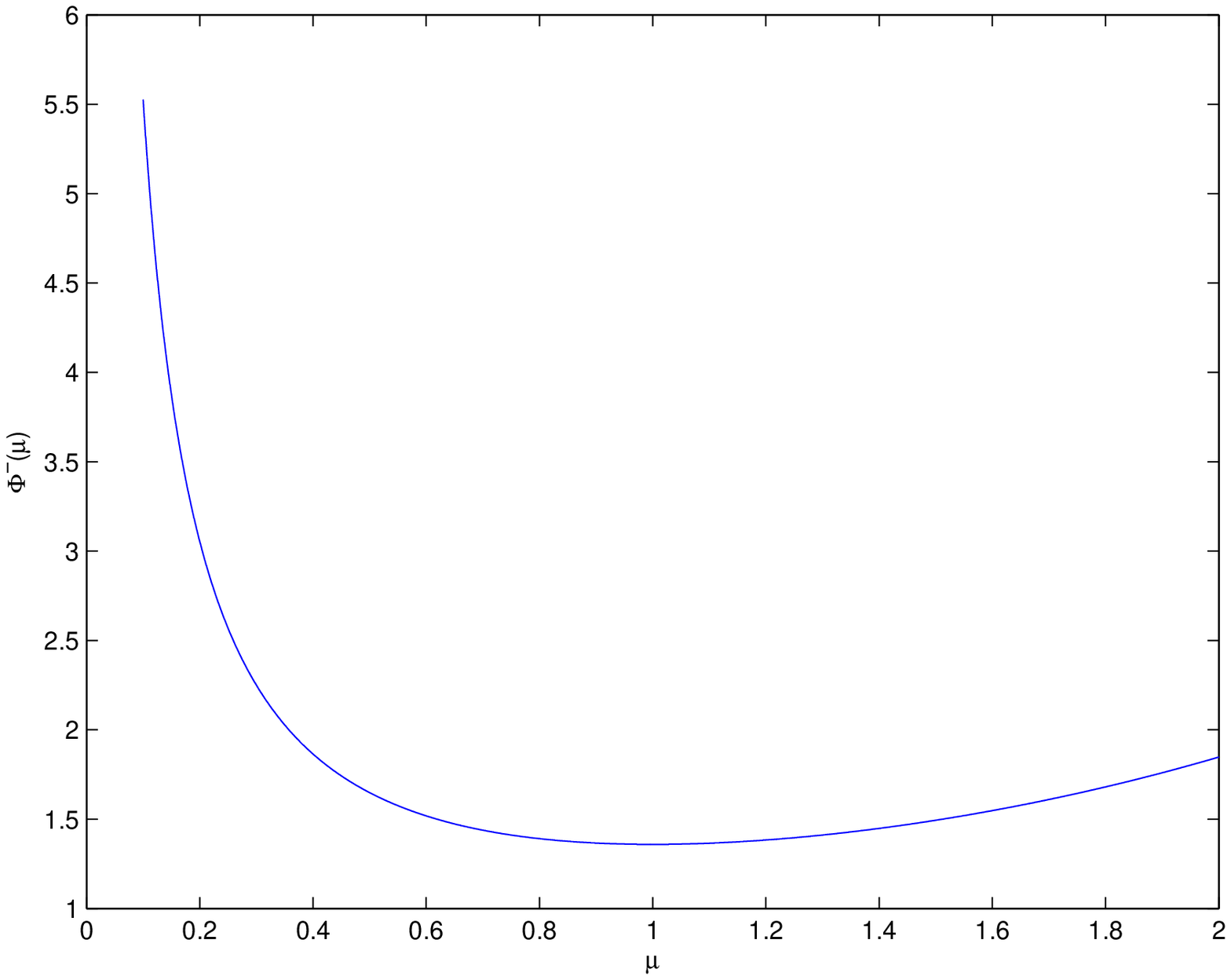}}
	\hfill
	\subfigure{\centering
		\includegraphics[width=3in]{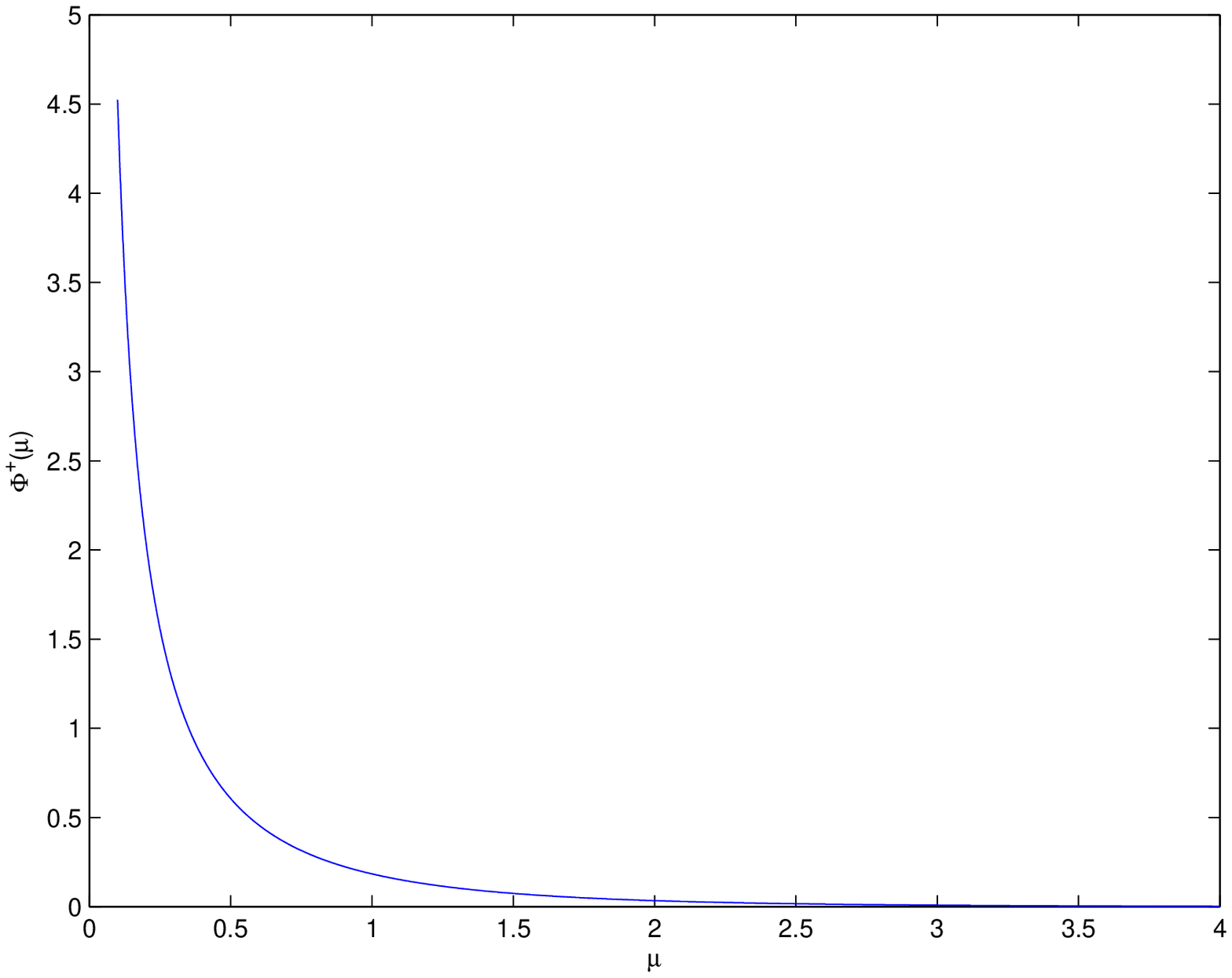}}
\caption{  \small  $\alpha=0$, $a=1$, $\beta=0.5$, the infimum of $\Phi^-(\mu)$ is a positive number which can be attained at some finite value while the infimum of $\Phi^+(\mu)$ is zero which is attained at infinite value.}
\end{figure}
\begin{figure}[H]
	\centering
	\subfigure{
		\centering
		\includegraphics[width=3in]{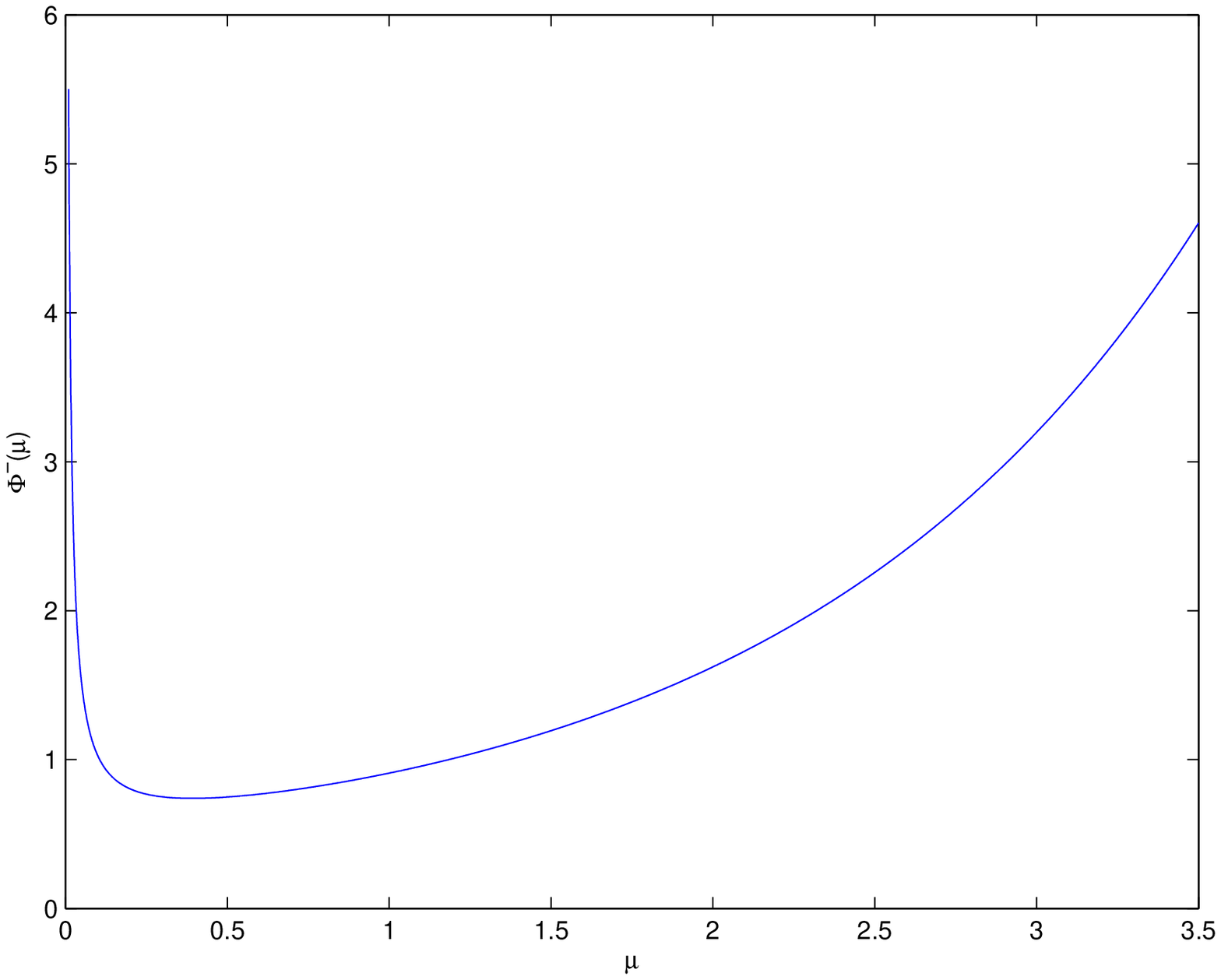}}
	\hfill
	\subfigure{\centering
		\includegraphics[width=3in]{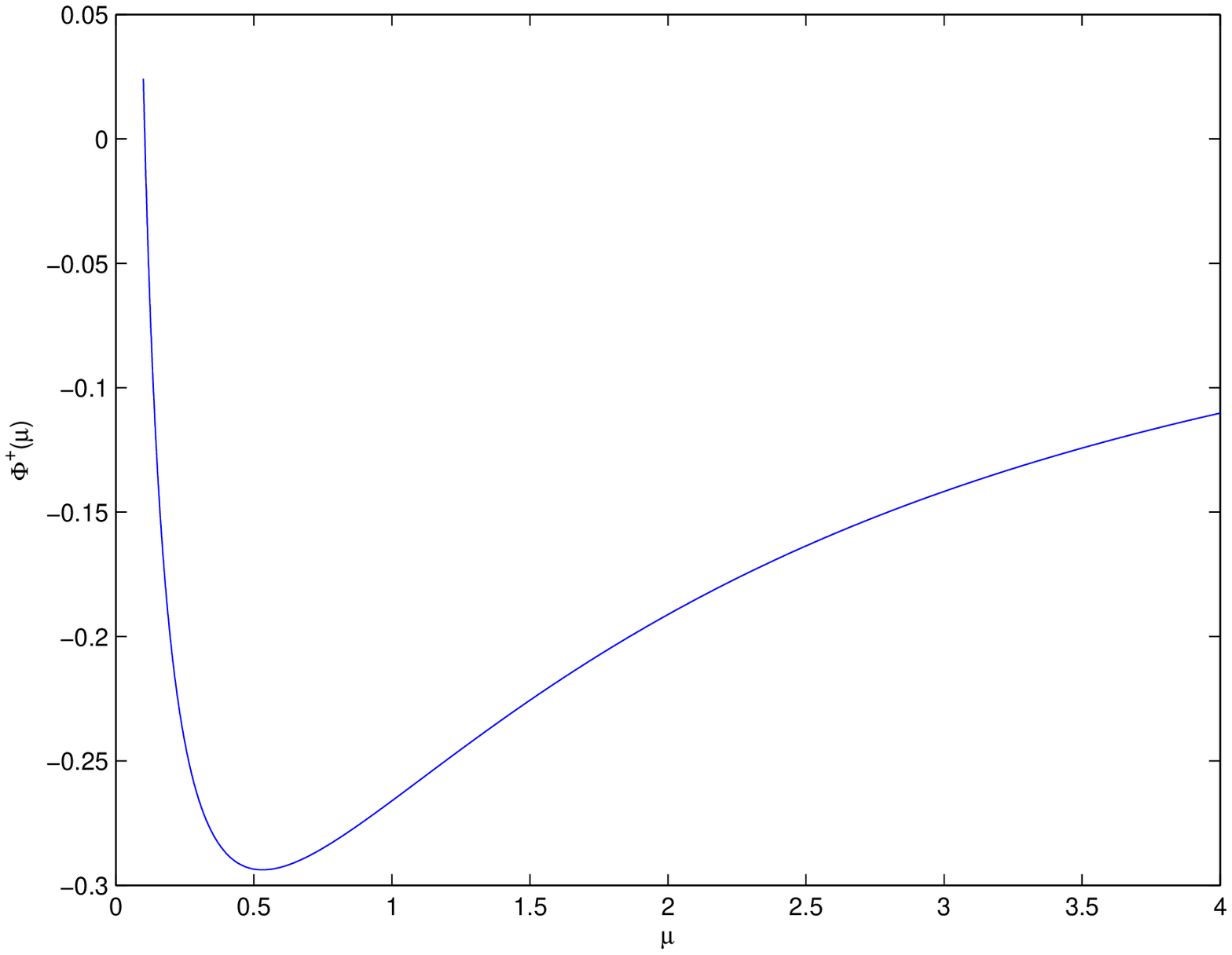}}
	\caption{  \small  $\alpha=0$, $a=0.55$, $\beta=0.5$, both of the infimum of $\Phi^-(\mu)$ and $\Phi^+(\mu)$ can be attained at some finite value. The infimum of $\Phi^-(\mu)$ is positive while the infimum of $\Phi^+(\mu)$ is negative.}
\end{figure}

\paragraph{Acknowledgements.}Zhi-Xian Yu was supported by Shanghai Leading Academic Discipline Project(No. XTKX2012), by Innovation Program of Shanghai Municipal Education Commission (No.14YZ096) and by the Hujiang Foundation of China (B14005).
Zhang's research is supported by the China Scholarship Council under a joint-training program at Memorial University of Newfoundland.


\end{document}